\newtheorem{mainthm}{Theorem}[]
\numberwithin{equation}{section}
\newtheorem{thm}{Theorem}[section]
\newtheorem{cor}[thm]{Corollary}
\newtheorem{lem}[thm]{Lemma}
\newtheorem{prop}[thm]{Proposition}
\newtheorem{defn}[thm]{Definition}
\newtheorem{rem}[thm]{Remark}
\newtheorem{expl}[thm]{Example}
\newcommand{\lra}{\longrightarrow}
\newcommand{\co}{\colon\!}
\newcommand{\overcat}{/} 
\newcommand{\heq}{\textup{he}}
\newcommand{\id}{\textup{id}}
\newcommand{\map}{\textup{map}}
\newcommand{\TOP}{\textup{TOP}}
\newcommand{\emb}{\textup{emb}}
\newcommand{\config}{\mathsf{con}} 
\newcommand{\fin}{\mathsf{Fin}}
\newcommand{\boxfin}{\mathsf{Boxfin}}
\newcommand{\simp}{\mathsf{simp}}
\newcommand{\sA}{\mathcal A}
\newcommand{\sB}{\mathcal B}
\newcommand{\sC}{\mathcal C}
\newcommand{\sE}{\mathcal E}
\newcommand{\sP}{\mathcal P}
\newcommand{\LL}{\mathbb L}
\newcommand{\RR}{\mathbb R}
\newcommand{\colimsub}[1]{\begin{array}[t]{cc} \textup{colim} \\
[-1.7mm] \scriptstyle{#1} \end{array}}
\newcommand{\hocolimsub}[1]{\begin{array}[t]{cc} \textup{hocolim} \\
[-1.2mm] \scriptstyle{#1} \end{array}}
\newcommand{\uli}{\underline}
\newcommand{\pre}{\textup{pre}}
\begin{document}

\title{The configuration category of a product}
\author{Pedro Boavida de Brito and Michael S. Weiss}%

\address{Dept. of Mathematics, IST, Univ. of Lisbon, Av. Rovisco Pais, Lisboa, Portugal}%
\email{pedrobbrito@tecnico.ulisboa.pt}

\address{Math. Institut, WWU M\"{u}nster, 48149 M\"{u}nster, Einsteinstrasse 62, Germany}%
 \email{m.weiss@uni-muenster.de}

\thanks{M.W. was supported by the Bundesministerium f\"ur Bildung und Forschung through the A.v.Humboldt
foundation (Humboldt professorship 2012-2017). P.B. was supported by FCT through grant SFRH/BPD/99841/2014.}

\subjclass[2000]{57R40, 55U40, 55P48}
\begin{abstract} A construction related to the Boardman-Vogt tensor product of operads allows us to describe
the configuration category of a product manifold $M\times N$ in terms of the configuration categories of the factors $M$ and $N$.
\end{abstract}
\maketitle

The configuration category of a manifold $M$ is a Segal space $\config(M)$ which aggregates the configuration space data of the manifold \cite{Andrade},  \cite{BoavidaWeissLong}. Its objects are configurations of points in $M$ (of any finite cardinality, and possibly zero); morphisms are paths of configurations in which points may fuse. More formally, a morphism from a $k$-point configuration $x$ to an $\ell$-point configuration $y$ is a pair $(f, H)$ where $f : \{1, \dots, k\} \to \{1, \dots, \ell\}$ is a map of finite sets and $H$ is a path in $M^{\times k}$ satisfying:
\begin{enumerate}
\item $H_0 = x$ and $H_1 = yf$,
\item If two components of $H_T$ agree for some $T$, then they agree for all $t \geq T$.
\end{enumerate}
The composition is given by composing functions and concatenating (Moore) paths. Different models for the configuration category of a manifold were given in \cite{BoavidaWeissLong}. The one above -- we call it the particle model -- is due to \cite{Andrade} and it has the advantage that it does not require the existence of a smooth structure to be defined.

In \cite{BoavidaWeissLong}, we described a way in which configuration categories allow for enlightening homotopical descriptions of spaces of smooth embeddings in high codimension. In this paper, we study the relation between configuration categories and products. The result is a description of the configuration
category of a product manifold in terms of the configuration categories of its factors. More precisely, we construct a bifunctor
$\boxtimes$ on the category of simplicial spaces over $\fin$, the nerve of the category of finite sets, and prove the following.

\begin{mainthm}\label{thm:main}
There is a natural weak equivalence of Segal spaces
$$\config(M) \boxtimes \config(N) \to \config(M \times N)$$
over $\fin$, for any two topological manifolds $M$ and $N$.
\end{mainthm}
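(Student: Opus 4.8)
The plan is to reduce the assertion to the Euclidean case and then reassemble by homotopy descent. Both the source and the target are values of functors of the pair $(M,N)$, covariant in codimension-zero open embeddings of each factor, and the comparison map is natural in $(M,N)$. The descent input I would use is that $\config$ carries a cover to a homotopy colimit: for a suitable cover $\{U_\alpha\}$ of $M$ one has $\config(M)\simeq\hocolim_\alpha\config(U_\alpha)$, the gluing being governed by the \v{C}ech nerve of the cover. Morally this holds because a finite configuration in $M$ is distributed among the members of the cover, and this bookkeeping persists through the fusion paths that form the morphisms of the Segal space; I would extract the precise statement from the methods of \cite{BoavidaWeissLong}.

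\textbf{The local case.} By this descent property it suffices to prove that the comparison map is an equivalence when $M$ and $N$ are Euclidean opens (or disjoint unions thereof), since a weak equivalence of Segal spaces over $\fin$ is detected on the fibers over each object of $\fin$, and objectwise equivalences of diagrams induce equivalences of homotopy colimits. So take $M=\RR^m$ and $N=\RR^n$, whence $M\times N=\RR^{m+n}$. Here $\config(\RR^m)$ encodes the little $m$-disks operad $E_m$ as its operadic core, and the comparison map $\config(\RR^m)\boxtimes\config(\RR^n)\to\config(\RR^{m+n})$ becomes the configuration-category incarnation of Boardman--Vogt/Dunn additivity $E_m\otimes E_n\simeq E_{m+n}$. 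Since $\boxtimes$ is modelled on the Boardman--Vogt tensor product, this is close to formal: I would check that the comparison map agrees, on operadic cores, with the additivity equivalence, and then invoke the latter.

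\textbf{Globalization.} To return to general $M$ and $N$ I would combine three facts. First, $\boxtimes$ preserves homotopy colimits in each variable separately, being a convolution-type construction, so for covers $\{U_\alpha\}$ of $M$ and $\{V_\beta\}$ of $N$ one has $\config(M)\boxtimes\config(N)\simeq\hocolim_{(\alpha,\beta)}\config(U_\alpha)\boxtimes\config(V_\beta)$. Second, the products $U_\alpha\times V_\beta$ form a cover of $M\times N$ by Euclidean opens, so descent for $\config$ gives $\config(M\times N)\simeq\hocolim_{(\alpha,\beta)}\config(U_\alpha\times V_\beta)$. Third, on each index $(\alpha,\beta)$ the comparison map is exactly the Euclidean map, an equivalence by the previous step. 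By naturality the global map is then a homotopy colimit of equivalences over a common indexing diagram, hence itself an equivalence, compatibly over $\fin$.

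\textbf{Main obstacle.} I expect the principal difficulty to lie at the interface of descent and the product. One must ensure that the double cover $\{U_\alpha\times V_\beta\}$ is fine enough to compute $\config(M\times N)$ by descent, even though a general open cover of $M\times N$ need not consist of product opens; this forces one to work through a basis and to verify that the Segal and completeness conditions survive the two homotopy colimits. Equally delicate is checking that $\boxtimes$ commutes with the relevant homotopy colimits on the morphism spaces of the Segal spaces, not merely on objects, and that the two descent computations can be organized over the same indexing diagram. Reconciling these, compatibly over $\fin$, is the technical heart of the argument.
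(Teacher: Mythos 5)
Your route is genuinely different from the paper's, but it has two gaps that I think are fatal as written. The first is the local case. You assert that for $M=\RR^m$ and $N=\RR^n$ the comparison map ``becomes the configuration-category incarnation of Dunn additivity'' and that checking this is ``close to formal''. It is not: identifying $\boxtimes$ with the Boardman--Vogt tensor product under the functor $j^*$ from dendroidal spaces to simplicial spaces over $N\fin$ is precisely the hard content. The paper poses the compatibility $j^*(P)\boxtimes j^*(Q)\simeq j^*(P\otimes Q)$ as an open question in general, and for little disks operads it deduces it \emph{from} theorem~\ref{thm:main} together with additivity, remarking that the Euclidean case of the theorem ``is in fact equivalent to'' the additivity theorem. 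So your local step either assumes what is to be proved or defers the entire difficulty to an unproven comparison between two localization machines, namely $(\Lambda K)^\infty$ applied to $\boxtimes^\pre$ on one side and the derived Boardman--Vogt product on the other. The paper instead proves the Euclidean and general cases simultaneously and without invoking additivity: in simplicial degree $0$ it stratifies $Q=\emb(\uli k, M\times N)$ by the collision patterns of the two coordinate projections and applies the exit-path theorem for homotopically stratified spaces (proposition~\ref{prop:hstrat}) to identify the resulting homotopy colimit with the degree-$0$ part of $\Lambda(\config(M)\boxtimes^\pre\config(N))$; it then bootstraps to degree $1$ (hence to all degrees, by the Segal condition) via ``property beta'', a local analysis identifying the comma categories $W/w$ with configuration categories of products of tubular neighborhoods.

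The second gap is the descent input. For an ordinary open cover, $\config(M)$ is not the homotopy colimit of the $\config(U_\alpha)$ over the \v{C}ech nerve: already in degree $0$ over $\uli k$, a $k$-point configuration need not lie in any single $U_\alpha$. You need Weiss covers (every finite subset contained in some member), typically by disjoint unions of Euclidean opens, and then you must (i) supply a cofinality argument so that the product cover $\{U_\alpha\times V_\beta\}$ computes $\config(M\times N)$ against the full poset of such opens in $M\times N$, and (ii) verify that $\boxtimes=(\Lambda K)^\infty(-\boxtimes^\pre-)$ commutes with these homotopy colimits; the pre-tensor does, being a pullback along a map of discrete simplicial spaces, but the localization commutes with homotopy colimits only up to re-localizing, so you must know that the relevant homotopy colimits of local objects are again conservative fiberwise complete Segal spaces over $N\fin$. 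You flag some of this in your ``main obstacle'' paragraph, but without the precise descent statement the globalization step is a program rather than a proof. None of this machinery is needed in the paper, whose argument is local only through the comma-category analysis in lemma~\ref{lem-beta}.
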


When $M=\RR^m$ and $N=\RR^n$, this theorem is reminiscent of the additivity theorem of Dunn \cite{Dunn}, Fiedorowicz-Vogt \cite{FieVo} and Lurie \cite{Lurie}. We comment on the relation between the two theorems at the end of the paper. For the purposes of this introduction, we just point out that the space of homotopy automorphisms of $\config(\RR^m)$ can be identified with the space of homotopy automorphisms of $E_m$~, the operad of little $m$-disks \cite[\S7]{BoavidaWeissLong}. As a result there is a form of friendly competition going on between the configuration category point of view and the operadic point of view. The configuration category point of view has some advantages. There is an obvious action of $\TOP(m)$, the topological group of homeomorphisms $\RR^m\to \RR^m$, on $\config(\RR^m)$, whereas it is not as easy to describe a corresponding action of $\TOP(m)$ on $E_m$\,.

Moreover, theorem~\ref{thm:main} suggests that these actions of $\TOP(m)$ on $\config(\RR^m)$ for various $m$ satisfy
product compatibility. More generally still, for a topological manifold $M$, the homeomorphism group $\TOP(M)$ acts on $\config(M)$ and theorem~\ref{thm:main} suggests that these actions satisfy product compatibility. This is indeed the case. There is a convenient formulation using homotopy orbit constructions, theorem \ref{thm:mainorb} below, which turns out to be equivalent to theorem \ref{thm:main}. (The homotopy orbit constructions are to be performed degreewise; $\config(M)$ is a simplicial space and $\TOP(M)$ acts on it preserving simplicial degree.)

\begin{mainthm}\label{thm:mainorb}
There is a natural weak equivalence of Segal spaces
$$\config(M)_{h\TOP(M)} \boxtimes \config(N)_{h\TOP(N)} \lra \config(M \times N)_{h(\TOP(M)\times\TOP(N))}$$
over $\fin$ and over $B(\TOP(M)\times\TOP(N))$, for any two topological manifolds $M$ and $N$.
\end{mainthm}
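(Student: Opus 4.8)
The plan is to deduce Theorem~\ref{thm:mainorb} from Theorem~\ref{thm:main} by applying a homotopy orbit construction to an equivariant refinement of the weak equivalence already produced there. The two statements are in fact equivalent: the passage in the opposite direction is recovered by taking degreewise homotopy fibres over the basepoint of $B(\TOP(M)\times\TOP(N))$, since the homotopy fibre of $X_{hG}\to BG$ recovers $X$. For the direction we need here, the essential input is that $\boxtimes$ and the homotopy orbit construction can be interchanged, so that orbits of the product may be distributed onto the two tensor factors.

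First I would record that the weak equivalence
$$\Phi\co \config(M)\boxtimes\config(N)\lra\config(M\times N)$$
of Theorem~\ref{thm:main}, being natural in $M$ and in $N$ with respect to homeomorphisms, is equivariant. The group $\TOP(M)\times\TOP(N)$ acts on the source by acting on the respective tensor factors, and on the target through the homomorphism $\TOP(M)\times\TOP(N)\to\TOP(M\times N)$, $(\varphi,\psi)\mapsto\varphi\times\psi$; the map $\Phi$ intertwines these actions, and it does so over $\fin$, on which both groups act trivially (a homeomorphism does not alter the cardinality of a configuration). Applying the degreewise construction $(-)_{h(\TOP(M)\times\TOP(N))}$, and using that homotopy orbits are a bar construction over the acting group and hence preserve degreewise weak equivalences of simplicial spaces, I obtain a weak equivalence
$$\Phi_{h}\co\bigl(\config(M)\boxtimes\config(N)\bigr)_{h(\TOP(M)\times\TOP(N))}\lra\config(M\times N)_{h(\TOP(M)\times\TOP(N))}$$
which is canonically a map over $\fin$ and over $B(\TOP(M)\times\TOP(N))$.

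It then remains to identify the source of $\Phi_{h}$ with $\config(M)_{h\TOP(M)}\boxtimes\config(N)_{h\TOP(N)}$. Here I would invoke the key structural property of $\boxtimes$: that it preserves homotopy colimits separately in each variable. Since $\TOP(M)$ acts only on the first factor and $\TOP(N)$ only on the second, the homotopy orbits of the product group split as iterated homotopy orbits, and each one may be moved into the corresponding tensor factor, giving
$$\bigl(\config(M)\boxtimes\config(N)\bigr)_{h(\TOP(M)\times\TOP(N))}\simeq\config(M)_{h\TOP(M)}\boxtimes\config(N)_{h\TOP(N)}$$
compatibly with the maps to $\fin$ and with the projections to $B\TOP(M)\times B\TOP(N)=B(\TOP(M)\times\TOP(N))$. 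Composing with $\Phi_{h}$ yields the asserted natural weak equivalence over $\fin$ and over $B(\TOP(M)\times\TOP(N))$.

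The main obstacle is the commutation statement for $\boxtimes$, that it carries homotopy colimits in either variable to homotopy colimits. Because $\boxtimes$ is built, after the fashion of the Boardman--Vogt tensor product, from a coend, it is a left adjoint in each variable and so commutes with ordinary colimits for formal reasons; the real work lies in upgrading this to a \emph{homotopy} colimit statement. Concretely I would check that $\boxtimes$ preserves degreewise weak equivalences and is compatible with the relevant bar constructions, which reduces the claim to the assertion that $\boxtimes$ is, in each variable, compatible up to coherent equivalence with the free action $X\mapsto G\times X$. This is the one point that genuinely uses the explicit definition of $\boxtimes$ rather than the surrounding formal framework.
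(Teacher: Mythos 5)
Your overall strategy---apply homotopy orbits to the equivalence of Theorem~\ref{thm:main} and then commute $(-)_{h(\TOP(M)\times\TOP(N))}$ past $\boxtimes$---is the right idea in spirit, and your remark that the two theorems are interderivable by passing to homotopy fibres over $B(\TOP(M)\times\TOP(N))$ matches the paper. But the step you defer to the end, the commutation of $\boxtimes$ with homotopy orbits, is not a technical afterthought: it is essentially the entire content of the theorem beyond Theorem~\ref{thm:main}, and the justification you sketch for it does not engage with the actual definition. In this paper $X\boxtimes Y$ is not a coend; it is the levelwise pullback $X\boxtimes^\pre Y$ (of $X\times Y$ along $N\boxfin\to N\fin\times N\fin$) followed by the localization $(\Lambda K)^\infty$ onto conservative, fiberwise complete Segal spaces over $N\fin$. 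The pullback part does commute with homotopy orbits, since $N\fin\times N\fin$ is levelwise discrete, so $(\config(M)\boxtimes^\pre\config(N))_{h(K\times L)}\simeq \config(M)_{hK}\boxtimes^\pre\config(N)_{hL}$ where $K=\TOP(M)$, $L=\TOP(N)$. The genuine issue is the localization: $\config(M)_{hK}\boxtimes\config(N)_{hL}$ is by definition local (conservative and fiberwise complete Segal over $N\fin$), whereas $\bigl(\config(M)\boxtimes\config(N)\bigr)_{h(K\times L)}$ is a homotopy colimit of local objects and there is no general reason for it to be local. Your proposed identification of the two holds if and only if that homotopy orbit object is already local, and reducing to ``compatibility with the free action $X\mapsto G\times X$'' does not address this; so as written the argument has a gap exactly at its load-bearing step.

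The paper closes this gap with a concrete device you do not mention: the homotopy fibre sequence $\config(M)\boxtimes^\pre\config(N)\to\config(M)_{hK}\boxtimes^\pre\config(N)_{hL}\to B(K\times L)\times N\fin$ over $N\fin$. Since $\Lambda$ is built from homotopy colimits, which are stable under homotopy base change, applying $\Lambda$ preserves this fibre sequence; the fibre is then $\config(M)\boxtimes\config(N)$ (already known to be fiberwise complete Segal and conservative by Theorem~\ref{thm-prods}), from which one deduces that a single application of $\Lambda$ to $\config(M)_{hK}\boxtimes^\pre\config(N)_{hL}$ is already local, so no further iteration of $\Lambda K$ is needed. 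The paper also constructs the comparison map directly at the level of $\boxtimes^\pre$ on homotopy orbits (adapting example~\ref{expl-conftens}) rather than relying on equivariance of the derived factorization from Theorem~\ref{thm:main}, and concludes by comparing the two fibre sequences over $B(K\times L)$: the map on fibres is an equivalence by Theorem~\ref{thm-prods}, hence so is the map on total spaces. If you want to salvage your outline, you should replace the appeal to ``$\boxtimes$ preserves homotopy colimits in each variable'' by a proof that $\bigl(\config(M)\boxtimes\config(N)\bigr)_{h(K\times L)}$ is conservative and fiberwise complete Segal over $N\fin$; the fibre sequence argument is the natural way to do that.
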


\section{Boxes} \label{sec-boxes}
As in \cite{BoavidaWeissLong}, let $\fin$ be the category
whose objects are the finite sets $\uli k=\{1,2,\dots,k\}$, where $k=0,1,2,\dots$. The morphisms from $\uli k$ to $\uli \ell$
are the maps from $\uli k$ to $\uli\ell$\,, not required to be order preserving.

\begin{defn} {\rm A surjective map $f\co\uli k \to \uli \ell$ is \emph{selfic} if the injective map $\uli\ell\to \uli k$
defined by $x\mapsto \min(f^{-1}(x))\in \uli k$ is increasing.
}
\end{defn}

\begin{rem} {\rm Let $\eta$ be an equivalence relation on $\uli k$ with $\ell$ distinct (nonempty) equivalence classes.
Then there is a unique selfic map $f\co\uli k \to \uli\ell$ such that the equivalence classes of $\eta$ are
precisely the sets $f^{-1}(t)$ where $t=1,2,\dots,\ell$. }
\end{rem}

\medskip
Let $\boxfin$ be the following (discrete) small category. An object of $\boxfin$
is a diagram of the shape $\uli r\leftarrow \uli k\to \uli s\,$ in $\fin$
where the two arrows are selfic (surjective) maps
and the resulting map $\uli k\to \uli r\times \uli s$ is \emph{injective}. A morphism
in $\boxfin$ is a natural transformation between such diagrams in $\fin$. There are forgetful functors
\[ p_0,p_1,p_2\co \boxfin\to\fin \]
taking the object $\uli r\leftarrow \uli k\to \uli s\,$ above to $\uli k$\,, $\uli r$ and $\uli s$ respectively.

The inclusion of $\boxfin$ into the analogous category obtained by replacing the word \emph{selfic} by the word \emph{surjective} is an equivalence of categories. The advantage of defining $\boxfin$ using selfic maps is that it guarantees that the map $p_0$ makes $N\boxfin$ into a fiberwise complete Segal space over $N\fin$. Recall from \cite{BoavidaWeissLong} that a simplicial space $Y$ is a \emph{fiberwise complete Segal space} over a Segal space $B$ if the square
\[
	\begin{tikzpicture}[descr/.style={fill=white}]
	\matrix(m)[matrix of math nodes, row sep=2.5em, column sep=2.5em,
	text height=1.5ex, text depth=0.25ex]
	{
	Y_1^{\heq} & B_1^{\heq} \\
	Y_0 & B_0 \\
	};
	\path[->,font=\scriptsize]
		(m-1-1) edge node [auto] {} (m-1-2)
		(m-2-1) edge node [auto] {} (m-2-2)
		(m-1-1) edge node [left] {$d_1$} (m-2-1)
		(m-1-2) edge node [auto] {$d_1$} (m-2-2);
	\end{tikzpicture}
\]
is homotopy cartesian, where the superscript $\heq$ stands for \emph{homotopy equivalences}. Fiberwise complete Segal spaces are the fibrations between (not necessarily complete) Segal spaces in Rezk's model structure.

\begin{defn} \label{defn-pretensor} {\rm Let $X$ and $Y$ be fiberwise complete Segal spaces over $N\fin$.
The simplicial space $X\boxtimes^\pre Y$ is the (levelwise) pullback of
\[
	\begin{tikzpicture}[descr/.style={fill=white}]
	\matrix(m)[matrix of math nodes, row sep=2.5em, column sep=2.5em,
	text height=1.5ex, text depth=0.25ex]
	{
		 & N\boxfin  \\
	X\times Y & N\fin\times N\fin \\
	};
	\path[->,font=\scriptsize]
		(m-2-1) edge node [auto] {} (m-2-2)
		(m-1-2) edge node [auto] {$(p_1,p_2)$} (m-2-2);
	\end{tikzpicture}
\]
It comes with a preferred reference map to $N\fin$ which is the composition
\[
X\boxtimes^\pre Y \xrightarrow{\quad} N\boxfin \xrightarrow{\; p_0 \;} N\fin~.
\]
}
\end{defn}

\begin{prop} \label{prop-preboxcomplete} $X\boxtimes^\pre Y$ is a fiberwise complete Segal space over $N\fin$.
\end{prop}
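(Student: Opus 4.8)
The plan is to deduce the statement from the characterization of fiberwise complete Segal spaces recorded just before Definition~\ref{defn-pretensor}: given Segal spaces $E$ and $B$, a reference map $E\to B$ exhibits $E$ as fiberwise complete over $B$ precisely when it is a fibration in Rezk's model structure. Writing $Z:=X\boxtimes^\pre Y$ with its reference map $Z\to N\boxfin\xrightarrow{\;p_0\;}N\fin$, it therefore suffices to verify two things: that $Z$ is a Segal space, and that this reference map is a fibration. Since $N\fin$ is the nerve of a category it is a (complete) Segal space, so the characterization does apply.

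The fibration statement is formal. The map $X\to N\fin$ is a fibration in Rezk's model structure because $X$ is fiberwise complete over $N\fin$, and likewise $Y\to N\fin$. In any model category the product of two fibrations is again a fibration, so $X\times Y\to N\fin\times N\fin$ is a fibration. By Definition~\ref{defn-pretensor}, $Z$ is the levelwise — hence categorical — pullback of this fibration along $(p_1,p_2)\co N\boxfin\to N\fin\times N\fin$, and fibrations are stable under base change, so $Z\to N\boxfin$ is a fibration. Finally $p_0\co N\boxfin\to N\fin$ is a fibration, exactly because $N\boxfin$ is fiberwise complete over $N\fin$ via $p_0$. The reference map of $Z$ is the composite of these two fibrations, and is thus a fibration.

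The remaining point, that $Z$ is a Segal space, is where the work lies. In simplicial degree $n$ one has
\[
Z_n=(X_n\times Y_n)\times_{(N\fin)_n\times(N\fin)_n}(N\boxfin)_n .
\]
Because $N\fin$ is a nerve, each $(N\fin)_n$ is discrete, so the left leg $X_n\times Y_n\to(N\fin)_n\times(N\fin)_n$ is a fibration of spaces and the displayed pullback is a homotopy pullback. Thus, levelwise, $Z$ is the homotopy pullback of the Segal spaces $X\times Y$, $N\fin\times N\fin$ and $N\boxfin$ (the first two are Segal spaces as products of Segal spaces, the third by hypothesis). The Segal condition is preserved by such homotopy pullbacks: by commuting homotopy limits (Fubini), the degree $n$ Segal map of $Z$ is identified with the homotopy pullback of the Segal maps of $X\times Y$, $N\fin\times N\fin$ and $N\boxfin$; each of these is an equivalence, and a homotopy pullback of equivalences is an equivalence. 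Hence $Z$ satisfies the Segal condition.

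Combining the two points, $Z$ is a Segal space whose reference map to $N\fin$ is a fibration in Rezk's model structure, so by the characterization $Z=X\boxtimes^\pre Y$ is fiberwise complete over $N\fin$. The main obstacle is the last step: one must check carefully that the strict levelwise pullback really computes a homotopy pullback — this is where discreteness of $N\fin$ in each degree is used — and that the Fubini rearrangement genuinely identifies the Segal map of the pullback with the pullback of the Segal maps, so that levelwise equivalence of the three Segal maps forces the Segal map of $Z$ to be an equivalence.
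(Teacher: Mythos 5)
Your proof is correct and follows essentially the same route as the paper's: both establish the Segal condition by observing that the defining levelwise pullback is a homotopy pullback (using that $N\fin\times N\fin$ is degreewise discrete), and both obtain fiberwise completeness over $N\fin$ from its stability under (homotopy) base change along $(p_1,p_2)$ and under composition with $p_0$, the only difference being that you phrase this via the fibration characterization in Rezk's model structure rather than via the homotopy cartesian square defining fiberwise completeness. The one small caveat is that a fiberwise complete Segal space over $N\fin$ is a fibration only after fibrant replacement, so the literal ``base change of a fibration'' step should really be the homotopy-invariant statement the paper uses; since the strict pullback here is already a homotopy pullback, this does not affect your conclusion.
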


\proof A homotopy pullback of Segal spaces is a Segal space. Therefore $X\boxtimes^\pre Y$ is a
Segal space. (We are using the fact that $(p_1,p_2)$ from $N\boxfin$ to $N\fin\times N\fin$ is a degreewise
fibration. This is trivial since $N\fin\times N\fin$ is a simplicial set alias simplicial discrete space.)

If, in a homotopy pullback square of Segal spaces
\[
	\begin{tikzpicture}[descr/.style={fill=white}]
	\matrix(m)[matrix of math nodes, row sep=2.5em, column sep=2.5em,
	text height=1.5ex, text depth=0.25ex]
	{
	A & B \\
	C & D \\
	};
	\path[->,font=\scriptsize]
		(m-1-1) edge node [auto] {} (m-1-2)
		(m-2-1) edge node [auto] {} (m-2-2)
		(m-1-1) edge node [left] {} (m-2-1)
		(m-1-2) edge node [auto] {} (m-2-2);
	\end{tikzpicture}
\]
the lower horizontal arrow makes $C$ fiberwise complete over $D$, then the upper horizontal arrow
makes $A$ fiberwise complete over $B$. This implies that $X\boxtimes^\pre Y$ is fiberwise complete
over $N\boxfin$ (take $C:=X\times Y$ and $D=N\fin\times N\fin$). Since $N\boxfin$ is fiberwise
complete over $N\fin$ via $p_0$~, it follows that $X\boxtimes^\pre Y$ is fiberwise complete over
$N\fin$. \qed

\begin{expl} \label{expl-config} {\rm Take $X=\config(M)$ and $Y=\config(N)$,
where $M$ and $N$ are topological manifolds. We use the particle models, so that $X=N\sA$ and $Y=N\sB$ where $\sA$ and $\sB$ are the topological categories defined in the introduction. Then $X\boxtimes^\pre Y=N(\sA\bar\times \sB)$ where an object of $\sA\bar\times\sB$ is a triple consisting of injections
$f\co \uli r\to M$, $g\co\uli s\to N$ and
$u\co \uli k\to \uli r\times\uli s$ such that the coordinates $\uli k\to \uli s$ and $\uli k\to \uli r$ of $u$ are selfic (surjective) maps.
These triples form a space in a fairly obvious way.
A morphism in $\sA\bar\times\sB$, say from an object $(f^0,g^0,u^0)$ to an object $(f^1,g^1,u^1)$,
is a triple consisting of a morphism from $f^0$ to $f^1$ in $\sA$,
a morphism from $g^0$ to $g^1$ in $\sB$ and a map
\[ \uli k^0 \lra \uli k^1 \]
over $\uli r^0\times \uli s^0 \lra \uli r^1\times \uli s^1$ (where $\uli k^j$, $\uli r^j$, $\uli s^j$ are the
sources of $u^j,f^j,g^j$ respectively, for $j=0,1$).
}
\end{expl}

\begin{rem} \label{rem-comma} {\rm For fiberwise complete Segal spaces $X$ and $Y$ over $N\fin$ as above,
let $W=X\boxtimes^\pre Y$. Select an object
\[  w\in W_0 \]
and let $x\in X_0$ and $y\in Y_0$ be the coordinates of $w$. We wish to relate the comma constructions $X\overcat x$ and
$Y\overcat y$ to $W\overcat w$. Clearly $X\overcat x$, $Y\overcat y$ and $W\overcat w$ are again fiberwise
complete Segal spaces
over $N\fin$ via the forgetful maps to $X$, $Y$ and $W$, respectively. It turns out that $W\overcat w$ is a full Segal subspace of
$X\overcat x \boxtimes^\pre Y\overcat y$, determined as such by a subset of $\pi_0$ of $\big( (X\overcat x) \boxtimes^\pre (Y\overcat y) \big)_0$.
To characterize the objects which belong to that full Segal subspace,
we only need to know the object of $\boxfin$ covered by $w$; call it $\lambda$.
An object
\[ u~\in~(X\overcat x)\boxtimes^\pre(Y\overcat y) \]
also covers an object in $\boxfin$, call it $\kappa$,
but in addition this comes with a distinguished pair of morphisms $p_1(\kappa)\to p_1(\lambda)$ and
$p_2(\kappa)\to p_2(\lambda)$ in $\fin$. That pair of morphisms in $\fin$ has at most one lift (choice of a matching morphism from $p_0(\kappa)$ to $p_0(\lambda)$ in $\fin$) to a single morphism $\kappa\to \lambda$ in $\boxfin$. The object $u$ of $(X\overcat x)\boxtimes^\pre(Y\overcat y)$ belongs to $W\overcat w$ if and only if such a lift exists.

Furthermore, suppose that $u\to u'$ is a morphism in $(X\overcat x)\boxtimes^\pre(Y\overcat y)$
which covers a morphism $\kappa\to \kappa'$ in $\boxfin$. If the induced morphism $p_0(\kappa)\to p_0(\kappa')$
is an isomorphism in $\fin$, then $u$ belongs to $W\overcat w$ if and only if $u'$ does.
}
\end{rem}

\section{Conservatization}
\begin{defn} \cite[\S8]{BoavidaWeissLong} \cite{GalToKo} {\rm Let $w\co A\to B$ be a map of simplicial spaces. We say that $w$ is \emph{conservative}
(or that $A$ is conservative over $B$) if,
for every monotone surjection $u\co [\ell]\to [k]$ in $\Delta$, the following diagram is weakly homotopy cartesian:
\[
	\begin{tikzpicture}[descr/.style={fill=white}]
	\matrix(m)[matrix of math nodes, row sep=2.5em, column sep=2.5em,
	text height=1.5ex, text depth=0.25ex]
	{
	A_k & A_\ell \\
	B_k & B_\ell \\
	};
	\path[->,font=\scriptsize]
		(m-1-1) edge node [auto] {$u^*$} (m-1-2)
		(m-2-1) edge node [auto] {$u^*$} (m-2-2)
		(m-1-1) edge node [left] {$w$} (m-2-1)
		(m-1-2) edge node [auto] {$w$} (m-2-2);
	\end{tikzpicture}
\]
}
\end{defn}

Let $\sC$ be a small (discrete) category and let $X$ be a fiberwise complete Segal space over the nerve $N\sC$,
with reference map $w\co X\to N\sC$.

\begin{lem} \label{lem-hensel} The following conditions on $w\co X\to N\sC$ are equivalent.
\begin{itemize}
\item[a)] An element $\gamma\in X_1$ is weakly invertible if and only if $w(\gamma)\in (N\sC)_1$ is an isomorphism.
\item[b)] $w$ is conservative.
\end{itemize}
\end{lem}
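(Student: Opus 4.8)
The plan is to reduce both conditions to a single homotopy cartesian square, the one attached to the codegeneracy $\sigma^0\co[1]\to[0]$, and then to close the gap between \emph{identity} morphisms (which conservativity sees directly) and \emph{isomorphisms} (the subject of (a)) by invoking fiberwise completeness. Write $\Sigma:=w^{-1}\big((N\sC)_1^{\heq}\big)\subseteq X_1$ for the morphisms of $X$ covering an isomorphism of $\sC$, and $E\subseteq X_1$ for the preimage of the identity morphisms; both are unions of path components of $X_1$ since $(N\sC)_1$ is discrete. Because $w$ is a map of Segal spaces it carries weakly invertible morphisms to isomorphisms, so $X_1^{\heq}\subseteq\Sigma$ always; the content of (a) is the reverse inclusion $\Sigma\subseteq X_1^{\heq}$.

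First I would translate (b). Using the Segal condition for $X$ and for $N\sC$, for a monotone surjection $u\co[\ell]\to[k]$ the operator $u^*$ is, in Segal coordinates, an iterated homotopy fibre product over the object spaces of identity maps $X_1\to X_1$ (one for each edge of $[\ell]$ not collapsed by $u$) and copies of $s_0\co X_0\to X_1$ (one for each collapsed adjacent pair). Since homotopy pullbacks commute and the square of an identity map is trivially cartesian, the conservativity square of $u$ is homotopy cartesian for all $u$ if and only if it is so for $\sigma^0$, i.e.\ if and only if the square with horizontal maps $s_0$ and vertical maps $w$ relating $X_0\to X_1$ to $(N\sC)_0\to(N\sC)_1$ is homotopy cartesian. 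As $s_0\co(N\sC)_0\to(N\sC)_1$ is the injection of the discrete set of identities, this says precisely that $s_0\co X_0\to E$ is a weak equivalence; call this statement $(\dagger)$. Thus (b) $\Leftrightarrow(\dagger)$.

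For (b)$\Rightarrow$(a) I would start from $(\dagger)$, which forces $d_1\co E\to X_0$ to be a weak equivalence (it has $s_0$ as a section), so every homotopy fibre $\hofiber_x(d_1\co E\to X_0)$ is contractible: the only morphism covering an identity with prescribed source is the corresponding degeneracy. On the other hand, taking homotopy fibres of the vertical maps in the fiberwise completeness square identifies, for each object $y$, the space of weakly invertible morphisms of $X$ incident to $y$ with the discrete set of isomorphisms of $\sC$ incident to $w(y)$. Now given $\gamma$ with $w(\gamma)=\delta$ an isomorphism, this identification applied to $\delta^{-1}$ yields a weakly invertible $\gamma'$ with $w(\gamma')=\delta^{-1}$ and composable with $\gamma$; the composite $\gamma'\gamma$ then covers $\delta^{-1}\delta=\id$, so by the contractibility just noted it agrees in $\textup{Ho}(X)$ with an identity. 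Thus $[\gamma]$ is a section of the isomorphism $[\gamma']$ in $\textup{Ho}(X)$, hence itself an isomorphism, and $\gamma$ is weakly invertible. This gives $\Sigma\subseteq X_1^{\heq}$, which is (a).

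For (a)$\Rightarrow$(b) I would verify $(\dagger)$. By (a), $X_1^{\heq}=\Sigma$, and fiberwise completeness identifies $X_1^{\heq}$ with the homotopy pullback $X_0\times^h_{(N\sC)_0}(N\sC)_1^{\heq}$ via $(d_1,w)$. Restricting this equivalence over the identities, a union of components on both sides, identifies $E$ with $X_0\times^h_{(N\sC)_0}(\text{identities})$; since the relevant face map sends the set of identities bijectively to $(N\sC)_0$, this homotopy pullback is just $X_0$, and the resulting comparison map $X_0\to E$ is readily seen to be $s_0$. Hence $(\dagger)$, and therefore (b), holds. The one genuinely non-formal point in all of this is the passage in (b)$\Rightarrow$(a) from identities to isomorphisms: conservativity and codegeneracies only ever produce identity morphisms, and it is fiberwise completeness that promotes the inverse $\delta^{-1}$ to an honest weakly invertible lift $\gamma'$, after which composing with $\gamma$ returns us to the identity case where $(\dagger)$ applies. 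Everything else is the routine reduction of conservativity to a single square by the Segal condition.
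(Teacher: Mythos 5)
Your proof is correct, and I see no gaps: the reduction of conservativity to the single codegeneracy square via the Segal condition, the reformulation of that square as ``$s_0\co X_0\to E$ is a weak equivalence'', and the use of fiberwise completeness to lift $\delta^{-1}$ and pass between identities and isomorphisms are all sound. The paper itself gives no argument for this lemma---it simply defers to \cite[\S8]{BoavidaWeissLong}---but your argument is precisely the standard one that this equivalence rests on, so there is nothing of substance to contrast.
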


\proof See \cite[\S8]{BoavidaWeissLong}.

\medskip
A formula for a localization procedure to ``make'' simplicial spaces over $B$ conservative over $B$ is given in \cite[\S8]{BoavidaWeissLong} in the case
where $B$ is a simplicial \emph{set}. The name of the procedure is \emph{conservatization} and if the input is $w\co A\to B$, then
the output of the procedure is a commutative diagram of simplicial spaces
\begin{equation} \label{eqn-faceloc}
\begin{tikzpicture}[descr/.style={fill=white}, baseline=(current bounding box.base)],
	\matrix(m)[matrix of math nodes, row sep=2.0em, column sep=2.0em,
	text height=1.5ex, text depth=0.25ex]
	{
	 A & A^! & \Lambda A \\
	  & B & \\
	};
	\path[->,font=\scriptsize]
		(m-1-2) edge node [above] {$\simeq$} (m-1-1)
		(m-1-2) edge node [right] {} (m-1-3)
		(m-1-1) edge node [left] {$w$} (m-2-2)
		(m-1-2) edge node [above] {} (m-2-2)
		(m-1-3) edge node [auto] {} (m-2-2);
\end{tikzpicture}
\end{equation}
where $\Lambda A$ is conservative over $B$. The map from
$A^!$ to $A$ is a degreewise weak equivalence. More precisely,
$A\mapsto \Lambda A$ and $A\mapsto A^!$ are endofunctors of the category of simplicial spaces over $B$. The
horizontal arrows in diagram~(\ref{eqn-faceloc}) are natural transformations.

\begin{defn} \label{defn-faceloc} {\rm The formula for $\Lambda A$ is
\[ (\Lambda A)_r:= \hocolimsub{[r]\to [k]\leftarrow [\ell]}  A_\ell\times_{B_\ell} B_k \]
where the homotopy colimit is taken over the opposite of following category $\sE(r)$. An object is a diagram $[r]\to [k]\leftarrow [\ell]$
in $\Delta$ where the second arrow, from $[\ell]$ to $[k]$, is onto. A morphism is a commutative diagram
\[
\begin{tikzpicture}[descr/.style={fill=white}, baseline=(current bounding box.base)],
	\matrix(m)[matrix of math nodes, row sep=1.5em, column sep=1.5em,
	text height=1.5ex, text depth=0.25ex]
	{
	 {[r]} & {[k]} & {[\ell]} \\
	 {[r]} & {[k']} & {[\ell']} \\
	};
	\path[->,font=\scriptsize]
		(m-1-1) edge node [right] {} (m-1-2)
		(m-1-3) edge node [right] {} (m-2-3)
		(m-1-3) edge node [right] {} (m-1-2)
		(m-2-3) edge node [above] {} (m-2-2)
		(m-2-1) edge node [auto] {} (m-2-2)
		(m-1-2) edge node [auto] {} (m-2-2);
	\draw [double equal sign distance] (m-1-1) to (m-2-1) node [anchor=mid west] {$$};
\end{tikzpicture}
\]
in $\Delta$ (top row = source, bottom row = target). The reference map from $\Lambda A$ to $B$ is fairly obvious from
the definition of $\Lambda A$~; it is the composition
\[ \hocolimsub{[r]\to [k]\leftarrow [\ell]}  A_\ell\times_{B_\ell} B_k
\quad\lra \colimsub{[r]\to [k]\leftarrow [\ell]} B_k \quad \lra \quad B_r\,.
\]
The formula for $A^!$ is
\[ (A^!)_r= \hocolimsub{[r]\to [k]} A_k = \hocolimsub{\rule{0mm}{4mm}[r]\to [k]\xleftarrow{=} [\ell]}  A_\ell\times_{B_\ell} B_k\]
so that $A^! \subset \Lambda A$.
}
\end{defn}

\begin{defn} \label{defn-facelocsmall} {\rm Sometimes the following variant of definition~\ref{defn-faceloc} is useful.
Let $\sE_0(r)\subset \sE(r)$ be the full subcategory of $\sE(r)$ spanned by the objects $[r]\to [k]\leftarrow [\ell]$
of $\sE(r)$ where \emph{both} arrows $[r]\to [k]$ and $[k]\leftarrow [\ell]$ are surjective. Let
\[  (\Lambda^\flat A)_r:= \hocolimsub{[r]\to [k]\leftarrow [\ell]}  A_\ell\times_{B_\ell} B_k \]
where now the homotopy direct limit is taken over the opposite of $\sE_0(r)$\,. Advantage:
it is a smaller package than $(\Lambda A)_r$\,. Disadvantage: it is not so clear how $[r]\mapsto (\Lambda^\flat A)_r$
is a simplicial space. The healing observation here is that the inclusion $\sE_0(r)\to \sE(r)$ has a right adjoint.
This fact may be used to define a simplicial structure on $\Lambda^\flat A$ and it leads to a simplicial map $\Lambda A\to \Lambda^\flat A$ which can be shown to be a weak equivalence. More details are given in
\cite[\S8]{BoavidaWeissLong}.
}
\end{defn}

In \cite[\S8]{BoavidaWeissLong} it is shown that
\begin{prop}\label{prop-leftadjoint}
The functor $\Lambda$ is a homotopical left adjoint to the inclusion of the full
subcategory of the conservative objects.
\end{prop}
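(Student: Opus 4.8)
The plan is to unwind what it means for $\Lambda$ to be a homotopical left adjoint to the inclusion $\iota$ of the full subcategory of conservative objects, and reduce it to two assertions. Since $\iota$ is fully faithful, the statement amounts to: (i) $\Lambda A$ is conservative over $B$ for every simplicial space $A$ over $B$, and (ii) for every conservative $C$ over $B$ the unit $\eta_A\co A\to\Lambda A$ of diagram~(\ref{eqn-faceloc}) — that is, the zig-zag $A\xleftarrow{\simeq}A^!\hookrightarrow\Lambda A$ — induces a weak equivalence
\[ \map_{/B}(\Lambda A, C)\;\longrightarrow\;\map_{/B}(A^!, C)\simeq\map_{/B}(A, C). \]
Here I use that the conservative objects are closed under levelwise weak equivalence and under homotopy limits (the defining squares of the conservativity condition are homotopy limits, so they are preserved), which is what makes the existence of a reflector meaningful. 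Given (i) and (ii), $\Lambda$ is the reflector and the adjunction is formal; the substance is in verifying the two points.

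For (i) I would fix a monotone surjection $u\co[q]\to[p]$ and show that the square relating $(\Lambda A)_p$, $(\Lambda A)_q$, $B_p$, $B_q$ is homotopy cartesian. The map $u$ induces a functor $u^*\co\sE(p)\to\sE(q)$ sending $[p]\to[k]\leftarrow[\ell]$ to $[q]\to[k]\leftarrow[\ell]$, and hence the simplicial structure map on $\Lambda A$. The key point to exploit is that $B$ is levelwise \emph{discrete}, so the homotopy pullbacks $A_\ell\times_{B_\ell}B_k$ split over the simplices of $B$ and the homotopy-cartesianness may be tested one simplex of $B$ at a time. Over each fixed simplex the claim becomes a contractibility statement about the comma categories built from $u^*$ and the monotone surjections parametrising $\sE(p)$ and $\sE(q)$; this is a Theorem~A--type computation.

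For (ii) I would first shrink the indexing category using Definition~\ref{defn-facelocsmall}: the inclusion $\sE_0(r)\hookrightarrow\sE(r)$ admits a right adjoint, hence is homotopy initial, so restriction along it is an equivalence on homotopy limits. Mapping the homotopy colimit defining $(\Lambda A)_r$ into $C_r$ over $B_r$ turns into a homotopy limit,
\[ \map_{/B_r}\big((\Lambda A)_r, C_r\big)\;\simeq\;\holimsub{\sE_0(r)}\ \map_{/B_r}\big(A_\ell\times_{B_\ell}B_k,\, C_r\big), \]
and I would then use the conservativity of $C$ — the homotopy-cartesian squares attached to the surjections $[\ell]\to[k]$ — to collapse this homotopy limit onto its value at the vertex $([r]=[r]=[r])$, which is $\map_{/B_r}(A_r,C_r)$. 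Assembling over $\Delta$ (a totalization/end) and using $A^!\simeq A$ then yields $\map_{/B}(\Lambda A,C)\simeq\map_{/B}(A,C)$ as required.

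The main obstacle is precisely this collapse in (ii), together with the contractibility computation in (i): both are genuinely combinatorial, and in (ii) it is essential that conservativity of $C$ enters in the right way. A purely formal cofinality argument cannot suffice, since it would wrongly collapse $\map_{/B}(\Lambda A,C)$ to $\map_{/B}(A^!,C)$ even for non-conservative $C$; what must be shown is that the cartesian squares of $C$ render the homotopy limit over the ``non-uniqueness'' of liftings $[r]\to[\ell]$ through $[\ell]\to[k]$ homotopically trivial. Once (i) and (ii) are established, there is a clean conceptual endpoint: the conservativity of $\Lambda A$ together with the fact (a special, easier case of (ii), provable by noting that the fibres of $\sE(r)\to[r]\overcat\Delta$ have terminal objects) that $\eta_A$ is an equivalence whenever $A$ is already conservative forces $\eta_{\Lambda A}$ to be an equivalence, so that $(\Lambda,\eta)$ is an idempotent localization onto the conservative objects, which is exactly the claimed homotopical left adjoint.
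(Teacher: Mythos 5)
Your main line of attack --- verify directly that $\Lambda A$ is conservative and that the unit zig-zag $A \leftarrow A^! \to \Lambda A$ induces an equivalence $\RR\map_B(\Lambda A, C) \to \RR\map_B(A,C)$ for every conservative $C$ --- is a genuinely different route from the paper's. The paper instead invokes the standard criterion for an idempotent localization: an endofunctor $\Lambda$ of the homotopy category with a natural transformation $\eta\co\id\to\Lambda$ is left adjoint to the inclusion of its essential image provided \emph{both} $\eta_{\Lambda X}$ \emph{and} $\Lambda(\eta_X)$ are equivalences; these two conditions are cited as Lemmas 8.7 and 8.8 of the companion paper, and the essential image is identified with the conservative objects because $A^!\to\Lambda A$ is a degreewise equivalence when $A$ is already conservative. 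The advantage of the paper's route is that both conditions are internal to the construction of $\Lambda$ and never require mapping into an arbitrary conservative target; your route front-loads all the difficulty into the mapping-space collapse in your step (ii), which you only sketch and which is essentially equivalent to the adjunction statement itself.

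The concrete gap is in your closing ``clean conceptual endpoint.'' From (i) and the special case of (ii) for conservative $A$ you deduce only that $\eta_{\Lambda A}$ is an equivalence. That alone does not make $(\Lambda,\eta)$ an idempotent localization: a well-pointed endofunctor with $\eta\Lambda$ invertible need not be a reflector, because the uniqueness half of the universal property also requires $\Lambda(\eta_A)$ to be an equivalence (from $g\circ\eta_A=g'\circ\eta_A$ one applies $\Lambda$ and cancels $\Lambda(\eta_A)$ to get $\Lambda g=\Lambda g'$, then uses naturality and invertibility of $\eta_{\Lambda A}$ to conclude $g=g'$). So either you must carry out the full mapping-space argument of (ii) --- in which case the last paragraph is redundant --- or you must add and prove the separate statement that $\Lambda(\eta_A)$ is a degreewise equivalence, which is precisely the paper's second input (Lemma 8.8 of the companion paper) and does not follow from what you have written.
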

\begin{proof}
Let $\mathbf{C}$ be a small category and suppose that we are given an endofunctor
$\Lambda\co \mathbf{C} \to \mathbf{C}$ and a natural transformation
$\eta:\id\to\Lambda$ such that
\[ \eta_{\rule{0mm}{2.3mm}\Lambda(X)}\co \Lambda(X) \to \Lambda^2(X) \]
is an isomorphism and
\[ \Lambda(\eta_{\rule{0mm}{2.3mm}X})\co \Lambda(X) \to \Lambda^2(X) \]
is an isomorphism for all $X$ in $\mathbf{C}$.
Then $\Lambda$ is the left adjoint to the inclusion of the subcategory
$\Lambda(\mathbf{C})$, spanned by objects in the essential image of $\Lambda$, into $\mathbf{C}$.

We apply this observation to our case by taking for $\mathbf{C}$ the homotopy category of
simplicial spaces over a fixed simplicial space $B$ (or of the $\infty$-category of such),
obtained by inverting degreewise weak equivalences. Diagram~(\ref{eqn-faceloc}) describes
$\eta$, the homotopical unit of the adjunction.
The condition that $\eta_\Lambda$ is a weak
equivalence is proved in \cite[Lemma 8.7]{BoavidaWeissLong}. The condition that
$\Lambda(\eta)$ is a weak equivalence is proved in \cite[Lemma 8.8]{BoavidaWeissLong}.
The inclusion $A^!\to \Lambda A$ is a degreewise
weak equivalence if $A$ is conservative over $B$ to begin with, so that the essential image
of $\Lambda$ is the subcategory of conservative objects.
\end{proof}

It should not be taken for granted that $\Lambda A$ is a fiberwise
complete Segal space over $B$ if $A$ is a fiberwise complete Segal space over $B$. Sufficient conditions
for that are given in section~\ref{sec-locdetails} below.

In section~\ref{sec-locmodel} we give a more illuminating description of the conservatization process
which uses model category notions. Definition~\ref{defn-faceloc} remains useful for example as a tool
in the proof of theorem~\ref{thm-prods} below.

In section~\ref{sec-locmodel} we also give an illuminating description of a process which
achieves fiberwise completion and conservatization simultaneously, in a universal manner. An explicit
description is as follows. We work with simplicial spaces $A$ equipped with a reference map to a fixed simplicial set $B=N\sC$.
Write $K$ for a natural Segalization and fiberwise completion procedure. That is to say, $K$ is an endofunctor on simplicial spaces
over $B$ together with a natural transformation $\eta\co\id\to K$ such that $KA\to B$ is always a fiberwise complete
Segal space, and $\eta$ is the derived unit defining $K$ as the derived left adjoint to the inclusion of the category of fiberwise complete Segal spaces over $B$ into all simplicial spaces over $B$. (This amounts to saying that $\eta$ is a weak equivalence in the model structure for complete Segal spaces.) As mentioned above, we do not claim that $\Lambda K A$ is always a Segal space, let alone a fiberwise complete Segal space over $B$.
But let us define $(\Lambda K)^\infty(A)$ as the homotopy colimit of the diagram of natural maps
\begin{equation*}
	\begin{tikzpicture}[descr/.style={fill=white}, baseline=(current bounding box.base)] ]
	\matrix(m)[matrix of math nodes, row sep=1.0em, column sep=1.0em,
	text height=1.5ex, text depth=0.25ex]
	{
	A	&					&							&	 \\
	KA	&					&							&	 \\
(KA)^!	&	\Lambda KA		&							&	 \\
		&	K\Lambda KA		&							&	 \\
		&	(K\Lambda KA)^!	&	\Lambda K\Lambda KA		&	 \\
		&					&	K\Lambda K\Lambda KA		&	 \\
		&					&	(K\Lambda K\Lambda KA)^!	& \cdots \\
	};
	\path[->,font=\scriptsize]
		(m-1-1) edge node [auto] {$\eta$} (m-2-1)
		(m-3-1) edge node [auto] {$\simeq$} (m-2-1)
		(m-3-1) edge node [auto] {} (m-3-2)
		(m-3-2) edge node [auto] {$\eta$} (m-4-2)
		(m-5-2) edge node [auto] {$\simeq$} (m-4-2)
		(m-5-2) edge node [auto] {} (m-5-3)
		(m-5-3) edge node [auto] {$\eta$} (m-6-3)
		(m-7-3) edge node [auto] {$\simeq$} (m-6-3)
		(m-7-3) edge node [auto] {} (m-7-4);
	\end{tikzpicture}
\end{equation*}
A sequential homotopy colimit of simplicial spaces over $B$ which are conservative over $B$ is again conservative over $B$.
Essentially for that reason $(\Lambda K)^\infty A$ is conservative over $B$, for any $A\to B$ as above. By the same reasoning,
$(\Lambda K)^\infty A=(K\Lambda)^\infty(KA)$ is a fiberwise complete Segal space over $B$.
Therefore applying $(\Lambda K)^\infty$ is a way to enforce the properties \emph{conservative over $B$}  and
\emph{fiberwise complete Segal} simultaneously.

If a single application of $\Lambda K$ to $A\to B$ has an outcome $\Lambda K A\to B$ which is already a fiberwise complete Segal space over $B$ for whatever reasons, then there is no need to inflict $\Lambda K$ again and again; in other words the inclusion $(\Lambda K)A\to (\Lambda K)^\infty A$
is then a degreewise weak equivalence. This is a special feature, but it turns out to be shared by configurations categories as we will see in section \ref{sec-locdetails}.

\begin{defn} \label{defn-unwieldy} {\rm Suppose that $X$ and $Y$ are conservative over $N\fin$, in addition to being fiberwise
complete Segal spaces over $N\fin$. Let
\[ X\boxtimes Y:= (\Lambda K)^\infty(X\boxtimes^\pre Y) \]
be the simplicial space over $N\fin$ obtained from $X\boxtimes^\pre Y$ by inflicting conservatization
and fiberwise complete Segalization simultaneously on $X\boxtimes^\pre Y$.
}
\end{defn}

\begin{expl} \label{expl-conftens} {\rm Take $X=\config(M)$, $Y=\config(N)$ and $Z=\config(M\times N)$. These are
all conservative and fiberwise complete over $N\fin$. Form $X\boxtimes^\pre Y$
as in definition~\ref{defn-pretensor}. There is an easy map from what is essentially
$X\boxtimes^\pre Y$ to $Z$ over $N\fin$. Here is a semi-detailed description. We use the particle models, so that
$X$, $Y$ and $Z$ are the nerves of certain topological categories $\sA$\,, $\sB$ and $\sC$,
respectively. Then $X\boxtimes^\pre Y$ is also the nerve of a topological category (over $\fin$) which we
called $\sA\bar\times \sB$ in example~\ref{expl-config}. Therefore we should be looking for a continuous functor
from $\sA\bar\times \sB$ to $\sC$. Clearly an object
\[ (f\co \uli r\to M,\,g\co \uli s\to N,\,u\co \uli k \to \uli r\times \uli s) \]
of $\sA\bar\times \sB$ (notation of example~\ref{expl-config})
determines an object of $\sC$ given by
\[ (f\times g)u\co \uli k\to M\times N\,. \]
Roughly the same prescription works for morphisms. A morphism
\[  \Gamma\co (f^0,g^0,u^0) \lra (f^1,g^1,u^1) \]
in $\sA\bar\times\sB$ determines a morphism
from $(f^0\times g^0)u^0$ to $(f^1\times g^1)u^1$ in $\sC$ \emph{provided}
the morphisms $f^0\to g^0$ and $f^1\to g^1$ in $\sA$ and $\sB$ determined by $\Gamma$, respectively,
are parameterized by the same interval $[0,a]$. This is then an extra condition that we must add;
so we obtain a continuous functor from a certain subcategory (nameless) of $\sA\bar\times \sB$ to $\sC$. The inclusion of the nerve of that subcategory in the nerve of $\sA\bar\times\sB$ is of course a weak equivalence.}
\end{expl}

The functor $(\Lambda K)^\infty$ is a derived left adjoint to the inclusion functor (of conservative, fiberwise complete
Segal spaces over $\fin$ into all simplicial spaces over $\fin$), as will be explained in section \ref{sec-locmodel}. Hence,
by its universal property, and since $Z$ is fiberwise complete and conservative over $N\fin$,
the map from what is essentially $X\boxtimes^\pre Y$ to $Z$ must have a unique (in the derived sense) factorization
through $X\boxtimes Y$.

\begin{thm} \label{thm-prods} The resulting map $\config(M)\boxtimes\config(N)\to \config(M\times N)$ is a weak equivalence. Indeed, the induced map  
\[ \Lambda(\config(M)\boxtimes^\pre\config(N))\lra \Lambda(\config(M\times N))\simeq\config(M\times N) \]
is already a degreewise weak equivalence. 
\end{thm}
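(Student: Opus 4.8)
The plan is to prove the sharper, second assertion and deduce the first from it. Write $W:=\config(M)\boxtimes^\pre\config(N)$ and $Z:=\config(M\times N)$. Since $Z$ is conservative over $N\fin$, the maps $Z\xleftarrow{\;\simeq\;}Z^!\to\Lambda Z$ are degreewise weak equivalences, so $\Lambda Z\simeq Z$; this is the displayed $\simeq$. Suppose we have shown that the natural map $\Lambda W\to\Lambda Z\simeq Z$ is a degreewise weak equivalence. The space $\Lambda W$ is conservative over $N\fin$ by construction, and the equivalence with $Z$ shows in addition that it is a fiberwise complete Segal space. On the other hand $W$ is already a fiberwise complete Segal space by Proposition~\ref{prop-preboxcomplete}, so the unit $W\to KW$ is a weak equivalence and $\Lambda KW\simeq\Lambda W$. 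As this single application of $\Lambda K$ already lands in fiberwise complete Segal spaces, the tower defining $(\Lambda K)^\infty W$ stabilizes after one step and $\config(M)\boxtimes\config(N)=(\Lambda K)^\infty W\simeq\Lambda W\simeq Z$. Thus it suffices to produce the degreewise equivalence $\Lambda W\to Z$.

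First I would fix the comparison map in the particle models following Example~\ref{expl-conftens}. Passing from $\sA\bar\times\sB$ to the weakly equivalent subcategory on which the $M$- and $N$-directed paths share a common parameter interval, one obtains an honest continuous functor $(f,g,u)\mapsto(f\times g)u$ and hence the map $W\to Z$ over $N\fin$. The discrepancy that $\Lambda$ must repair is then visible directly: a morphism of $W$ which fuses two $M$-coordinates while holding the corresponding $N$-coordinates apart leaves the $p_0$-label in $N\fin$ bijective, yet is not weakly invertible in $W$; its image in $Z=\config(M\times N)$ is genuinely invertible, because the two points never collide in $M\times N$. These ``one-factor collisions'' are exactly the morphisms on which $W\to N\fin$ fails to be conservative, and conservatization is precisely the operation that inverts them.

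To establish the degreewise equivalence I would use the explicit formula of Definition~\ref{defn-faceloc}, or its smaller model of Definition~\ref{defn-facelocsmall}, writing
\[ (\Lambda W)_r=\hocolimsub{[r]\to[k]\leftarrow[\ell]} W_\ell\times_{(N\fin)_\ell}(N\fin)_k \]
and comparing the resulting natural map with $Z_r$ for each $r$. On objects ($r=0$) the map $W_0\to Z_0$ is already a weak equivalence, both sides modelling injective configurations in $M\times N$, the selfic book-keeping being a canonical and contractible choice. The decisive case is $r=1$, the mapping spaces, which subsumes the higher levels by concatenation; here I would localize at a target object $w\in W_0$ with coordinates $x\in\config(M)_0$, $y\in\config(N)_0$ and image $c\in Z_0$, and invoke Remark~\ref{rem-comma}, which realizes the comma object $W\overcat w$ as a full Segal subspace of $(\config(M)\overcat x)\boxtimes^\pre(\config(N)\overcat y)$. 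Conservatizing and comparing this with $Z\overcat c=\config(M\times N)\overcat c$ reduces the whole problem to a local statement, in which $M$ and $N$ may be taken to be Euclidean.

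The heart of the matter, and the step I expect to be the main obstacle, is this local identification: one must show that formally inverting the one-factor collisions converts the ``product paths'' recorded by $W$ into the arbitrary diagonal paths recorded by $Z$. Concretely, an arbitrary path of configurations in $M\times N$ must be exhibited, up to the equivalence generated by the inverted collisions together with simplicial subdivision, as a concatenation of staircase pieces, each a product of an $M$-move with an $N$-move. I expect the cleanest route is a cofinality argument matching the opposite of the indexing category $\sE(r)$ of the homotopy colimit with a poset of such staircase subdivisions, supplemented by a contractibility statement for the space of subdivisions of a fixed diagonal path. This is where the genuine geometric input enters---and where the analogy with the additivity combinatorics of Dunn, Fiedorowicz--Vogt and Lurie becomes a guide---rather than the formal bookkeeping of the earlier steps.
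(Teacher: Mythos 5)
Your formal reductions at the start are sound (conservativity of $\config(M\times N)$ makes $\Lambda Z\simeq Z$, and one application of $\Lambda K$ suffices once $\Lambda W$ is known to be fiberwise complete Segal), and localizing at a target object via Remark~\ref{rem-comma} is indeed how the paper handles degree $1$. But there is a genuine error at the step you dismiss as easy: the map $W_0\to Z_0$ is \emph{not} a weak equivalence. The space $W_0$ over $\uli k$ is the disjoint union of the strata of $\emb(\uli k, M\times N)$ indexed by the pairs of selfic surjections recording which coordinates collide in $M$ and in $N$, and the map to $Z_0$ is the continuous bijection reassembling these strata; it destroys the topology across stratum boundaries. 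For $M=N=\RR$ and $k=2$, $W_0$ is a disjoint union of eight contractible pieces while $Z_0\simeq S^1$. The correct degree-$0$ statement concerns $(\Lambda W)_0\to Z_0$, and proving it is where the paper's real geometric input sits: one identifies $(\Lambda^\flat W)_0$ with the homotopy colimit of the nerve of the reverse exit path category of the stratified space $\emb(\uli k,M\times N)$, and invokes the theorem of Miller/Lurie/Woolf that for a locally contractible homotopically stratified space this recovers the space itself (Proposition~\ref{prop:hstrat}, used in Lemma~\ref{lem-firststep}). Your proposal contains no substitute for this.

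Relatedly, you have mislocated the difficulty in degree $1$. The staircase-subdivision cofinality argument you anticipate as ``the heart of the matter'' is left entirely unproved, and the paper avoids it altogether: after identifying $W\overcat w$ inside $(\config(M)\overcat x)\boxtimes^\pre(\config(N)\overcat y)$ and using that $\config(M)\overcat x\simeq\config(U)$ for a tubular neighborhood $U$, the degree-$0$ part of $\Lambda(W\overcat w)$ is identified with $\config(T)_0$ for a union of components $T\subset U\times V$ --- i.e.\ degree $1$ reduces to the degree-$0$ statement for open subsets, so the exit-path theorem is the only geometric ingredient. This reduction is packaged as \emph{property beta} (Lemma~\ref{lem-beta}), which together with Theorem~\ref{thm-transp} also supplies the fact that $\Lambda W$ is a Segal space; you need that fact to justify your claim that degree $1$ ``subsumes the higher levels,'' but in your plan it is only obtained a posteriori from the equivalence you are trying to prove.
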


This will be proved in section~\ref{sec-locdetails} after some preparations. But we can make a start here by proving a much
weaker statement.

\begin{lem} \label{lem-firststep} The resulting map $\Lambda(\config(M)\boxtimes^\pre\config(N))\lra \config(M\times N)$ is a weak equivalence
in simplicial degree $0$.
\end{lem}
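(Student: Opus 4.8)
The plan is to compute both sides of the degree-$0$ map explicitly as classifying spaces of topological categories and then compare them. Write $A=\config(M)\boxtimes^\pre\config(N)$ and $Z=\config(M\times N)$. Since the reference maps of $\Lambda A$ and $Z$ to $(N\fin)_0=\ob\fin$ are compatible and $\ob\fin$ is discrete, I would argue one object $\uli k$ at a time. For the source I would use the small model $\Lambda^\flat$ of Definition~\ref{defn-facelocsmall}, which is degreewise weakly equivalent to $\Lambda$. In simplicial degree $0$ the indexing category collapses: the requirement that $[0]\to[k]$ be surjective forces $k=0$, so an object of $\sE_0(0)$ is just a surjection $[\ell]\to[0]$ and $\sE_0(0)\cong\Delta$. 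Hence
\[ (\Lambda^\flat A)_0\;=\;\hocolim_{[\ell]\in\Delta^{\op}} A_\ell\times_{(N\fin)_\ell}(N\fin)_0, \]
and the fibre product singles out exactly those $\ell$-simplices of $A=N(\sA\bar\times\sB)$ whose underlying chain in $\fin$ is constant. Using Example~\ref{expl-config} this identifies the simplicial space above, over the object $\uli k$, with the nerve of the subcategory $(\sA\bar\times\sB)_{\uli k}\subset\sA\bar\times\sB$ of those objects with underlying set $\uli k$ and those morphisms covering $\id_{\uli k}$. The homotopy colimit over $\Delta^{\op}$ is a geometric realization, so over $\uli k$ the source is the classifying space $B\big((\sA\bar\times\sB)_{\uli k}\big)$.

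For the target, $Z$ is conservative over $N\fin$, so by Proposition~\ref{prop-leftadjoint} (together with the fact, stated after Definition~\ref{defn-faceloc}, that $Z^!\to Z$ is always a degreewise weak equivalence) the map $\Lambda Z\to Z$ is a degreewise weak equivalence. Over $\uli k$, the space $Z_0$ is the configuration space $E_{\uli k}:=\mathrm{Emb}(\uli k,M\times N)$, and the same computation exhibits it as $B(\sC_{\uli k})$, where $\sC_{\uli k}$ is the corresponding vertical subcategory of $\sC=\config(M\times N)$. Because the underlying map $\uli k\to\uli k$ of a vertical morphism is the identity, hence injective, no fusion of particles can occur, so $\sC_{\uli k}$ is simply the Moore path category of $E_{\uli k}$; consequently $B(\sC_{\uli k})\simeq E_{\uli k}$, in harmony with the direct description of $Z_0$.

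It remains to prove that the map $B\Phi_{\uli k}\colon B\big((\sA\bar\times\sB)_{\uli k}\big)\to B(\sC_{\uli k})\simeq E_{\uli k}$ induced by the box-forgetting functor $\Phi$ of Example~\ref{expl-conftens} is a weak equivalence. Here I must first pass to the nameless subcategory on which $\Phi$ is actually defined (Moore paths parametrised by a common interval); its inclusion is a weak equivalence and so does not affect classifying spaces. On objects $\Phi_{\uli k}$ is the continuous bijection $\coprod_\beta E_\beta\to E_{\uli k}$ exhibiting the stratification of $E_{\uli k}$ by box type, the stratum $E_\beta$ being homeomorphic to $\mathrm{Emb}(\uli r_\beta,M)\times\mathrm{Emb}(\uli s_\beta,N)$, while the fusion morphisms realise the entrance paths of this stratification. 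I would establish the equivalence by Quillen's Theorem~A in its topological form, reducing to the contractibility of each comma category $\Phi_{\uli k}/c$; equivalently, one may identify $B\big((\sA\bar\times\sB)_{\uli k}\big)$ with the homotopy colimit, over the poset of box structures on $\uli k$, of the products $\mathrm{Emb}(\uli r_\beta,M)\times\mathrm{Emb}(\uli s_\beta,N)$ along the fusion maps, and match this with the decomposition of $E_{\uli k}$ into strata and links.

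The main obstacle is exactly this last comparison: showing that fusions performed \emph{separately} in the two factors generate enough entrance paths to reconstruct the homotopy type of $E_{\uli k}$. I expect to resolve it by a locality argument, using charts on $M$ and $N$ around the points of a fixed configuration together with isotopy extension to reduce to the conically stratified local model $M=\RR^m$, $N=\RR^n$, followed by an explicit deformation that degenerates a box-decomposed configuration onto the canonical box structure of $c$ along the given path, contracting $\Phi_{\uli k}/c$. The mechanism, and a useful sanity check, is already visible for $\uli k=\underline 2$: the three box strata contribute $\mathrm{Conf}_2(\RR^m)\times\mathrm{Conf}_2(\RR^n)\simeq S^{m-1}\times S^{n-1}$, $\mathrm{Conf}_2(\RR^m)\simeq S^{m-1}$ and $\mathrm{Conf}_2(\RR^n)\simeq S^{n-1}$, and the fusion maps assemble them into the join $S^{m-1}\ast S^{n-1}=S^{m+n-1}\simeq\mathrm{Conf}_2(\RR^{m+n})=E_{\underline 2}$, the join being precisely the shadow of the additivity phenomenon underlying Theorem~\ref{thm:main}.
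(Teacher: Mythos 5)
Your reduction is sound and, up to the decisive step, coincides with the paper's own argument: in simplicial degree $0$ the indexing category of $\Lambda^\flat$ collapses as you say, $(\Lambda^\flat A)_0$ over a fixed $\uli k$ becomes the realization of the nerve of the vertical topological subcategory of $\sA\bar\times\sB$ (morphisms covering $\id_{\uli k}$, so that fusions may still occur separately in the $M$- and $N$-coordinates), this subcategory is the reverse exit path category of the stratification of $Q=\emb(\uli k, M\times N)$ by box type (pairs of selfic surjections $\uli k\to\uli r$, $\uli k\to\uli s$), and the target over $\uli k$ is $|N\sP_Q|\simeq Q$ since no fusion is possible over $\id_{\uli k}$ in $\config(M\times N)$. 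All of this matches the paper.

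The gap is the step you yourself flag as ``the main obstacle.'' The paper closes the argument by invoking Proposition~\ref{prop:hstrat} (Miller, Lurie, Woolf): for a homotopically stratified, locally contractible space $Q$, the inclusion of the reverse exit path category into the full path category induces a weak equivalence $|N\sE\sP_Q|\to |N\sP_Q|\simeq Q$. Your plan---Quillen's Theorem~A for topological categories together with a chart-by-chart contraction of the comma categories $\Phi_{\uli k}/c$---is in essence a proposal to reprove that theorem in the case at hand, and none of its genuine difficulties are addressed: you would need a version of Theorem~A valid for topological (not discrete) categories; you would need to verify that the box stratification of $\emb(\uli k, M\times N)$ is homotopically stratified (homotopy links fibering appropriately over strata) for arbitrary \emph{topological} manifolds, where isotopy extension and local models are delicate; and the ``explicit deformation onto the canonical box structure along the given path'' is precisely the hard local-to-global content of the proofs of Miller and Lurie. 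As written the proof is therefore incomplete; you should either cite the exit-path theorem for homotopically stratified spaces or actually carry out your program, which would amount to an independent proof of a special case of it. The $\uli k=\underline{2}$ computation assembling $S^{m-1}\ast S^{n-1}\simeq S^{m+n-1}$ is a correct and instructive consistency check, but it is not a substitute for the general argument.
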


In order to prove this lemma, we will use a result about stratified spaces. For a homotopically stratified space $Q$, let $\sE\sP_Q$ denote the (reverse) exit path category of $Q$. This is a topological category. The object space of $\sE\sP_Q$ is by definition the topological disjoint union of the strata of $Q$, where each stratum has its own topology as a subspace of $Q$. The morphism space is the space of \emph{reverse} exit paths in $Q$. There is an inclusion
\[
\iota : \sE\sP_Q \to \sP_Q
\]
into the full path category of $Q$.
\begin{prop}\label{prop:hstrat}
Let $Q$ be a homotopically stratified space which is locally contractible. Then $\iota$ induces a weak equivalence
$|N \sE\sP_Q| \rightarrow |N \sP_Q| \simeq Q$.
\end{prop}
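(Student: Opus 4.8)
The plan is to treat $|N\sP_Q|\simeq Q$ as known -- this is the standard fact that the Moore path category of a space recovers its weak homotopy type, every Moore path being a weak equivalence in $\sP_Q$ -- so that the content of the proposition is the assertion that $|N\iota|\co |N\sE\sP_Q|\to |N\sP_Q|$ is a weak equivalence. The difficulty is that $\iota$ changes the category in two ways at once: on objects it replaces $Q$ by the disjoint union of its strata, each carrying its own subspace topology, and on morphisms it restricts from all Moore paths to the reverse exit paths. I would prove the equivalence by induction on the stratification, the engine of the induction being Quinn's homotopy link.

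First I would reduce to the case of finite stratification depth and then to a single extension step. Writing $U_n\subset Q$ for the union of the strata of depth less than $n$ (which is open in $Q$ by the frontier condition and has finite depth), we have $Q=\colim_n U_n$; the locally contractible and homotopically stratified hypotheses make these open inclusions cofibration-like, so that $|N\sE\sP_{(-)}|$ carries this colimit to a homotopy colimit, and a sequential homotopy colimit of weak equivalences is a weak equivalence. It therefore suffices to treat the finite-depth case, which I would handle by induction on the depth $d$. For $d=0$ the space is a disjoint union of strata, every path is a reverse exit path, so $\sE\sP_Q=\sP_Q$ and there is nothing to prove. For $d>0$, let $S\subset Q$ be the closed union of the deepest strata and $U=Q\setminus S$ the open complement, which has depth $d-1$; the inductive hypothesis gives the result for $U$, while $S$, being a disjoint union of single strata, again satisfies $\sE\sP_S=\sP_S$ with $|N\sE\sP_S|\simeq S$. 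The extension step is to deduce the result for $Q$ from these two inputs.

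For the extension step I would use the holink description of a homotopically stratified space. Quinn's theory supplies a holink evaluation fibration $\holink(Q,S)\to S$ together with a germ map $\holink(Q,S)\to U$, and expresses $Q$, up to weak equivalence, as the double mapping cylinder (homotopy pushout) of
\[ S\;\longleftarrow\;\holink(Q,S)\;\longrightarrow\;U. \]
I would then establish the parallel decomposition on the exit-path side: the reverse exit paths connecting the $U$-strata to $S$ are, up to weak equivalence, parameterized by the same holink, so that $|N\sE\sP_Q|$ is the homotopy pushout of $|N\sE\sP_S|\leftarrow(\text{holink term})\to|N\sE\sP_U|$. The map $|N\iota|$ carries the first pushout square to the second: it is $|N\sP_S|\simeq S$ on the $S$-corner, the inductive equivalence on the $U$-corner, and a homotopy equivalence on the holink corner by construction. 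The gluing lemma for homotopy pushouts then shows $|N\iota|$ is a weak equivalence, completing the induction.

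The main obstacle is precisely the middle claim of the previous paragraph: identifying the space of reverse exit paths from the open part down to $S$ with the holink $\holink(Q,S)$, and upgrading this identification to a genuine homotopy-pushout decomposition of the exit path category that is compatible with Quinn's pushout decomposition of $Q$. This is exactly where both hypotheses earn their keep -- the homotopically stratified condition provides the holink fibration and the pushout description of $Q$, while local contractibility controls the local homotopy types of strata, links, and neighborhoods so that the holink comparison is an equivalence and the exhausting colimit is a homotopy colimit. As a consistency check one could instead run a topological Quillen Theorem A, proving that for each point $q\in Q$ the realized comma category $|N(\iota/q)|$ is contractible; the holink again computes these comma categories, so the two routes rest on the same key input.
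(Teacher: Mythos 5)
The paper does not actually prove this proposition: it is imported from the literature, with pointers to \cite[Corollary 9.3]{Miller2}, \cite[Corollary A.10.4]{Lurie} and \cite[Corollary 4.6]{Woolf}. So there is no internal argument to compare yours against; the relevant comparison is with those sources. Your outline follows the standard strategy of the holink-based proofs (closest in spirit to Miller's): exhaust by finite depth, induct on depth, and glue over Quinn's homotopy link. The reductions you describe --- the exhaustion $Q=\colim_n U_n$, the depth-zero base case where $\sE\sP_Q=\sP_Q$, and the homotopy pushout description of $Q$ itself coming from forward tameness and the fibration property of $\holink(Q,S)\to S$ --- are all sound, granting the point-set hypotheses under which the cited results are actually stated.

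As a proof, however, your proposal has a genuine gap exactly where you flag it, and it is the whole content of the theorem. In the two-stratum extension step, $\sE\sP_Q$ is the collage of $\sE\sP_U$ and $\sE\sP_S$ along the bimodule $M$ of reverse exit paths running from $U$ into $S$, so $|N\sE\sP_Q|$ is the double mapping cylinder of
\[ |N\sE\sP_U| \longleftarrow |B(\sE\sP_U, M, \sE\sP_S)| \longrightarrow |N\sE\sP_S| \]
where the middle term is the two-sided bar construction --- not $M$, and not $\holink(Q,S)$ on the nose (for a group acting trivially on a point the analogous bar construction gives $BG$, not a point, so no formal collapse is available). The substantive claim is that this bar construction is weakly equivalent to $\holink(Q,S)$ compatibly with the two evaluation maps; this is precisely where forward tameness and the holink fibration condition must be used, and it does not follow from anything you have set up. Writing that ``the reverse exit paths connecting the $U$-strata to $S$ are, up to weak equivalence, parameterized by the same holink'' names the statement to be proved rather than proving it. As it stands, the proposal is a correct road map to the proofs in Miller and Lurie, not a self-contained proof; for the purposes of this paper the honest move is the one the authors make, namely to cite the result.
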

Here the vertical bars denote geometric realization or homotopy colimit over $\Delta$ (the two agree since we are free to take spaces to mean simplicial sets in our context). This appears in \cite[Corollary 9.3]{Miller2} and also \cite[Corollary A.10.4]{Lurie} and \cite[Corollary 4.6]{Woolf}).

\proof[Proof of Lemma \ref{lem-firststep}] The ordered configuration spaces of $M\times N$ have a natural stratification related to the product structure. Indeed,
an embedding $a\co\uli k\to M\times N$ determines maps $\uli k\to M$ and $\uli k\to N$ which can fail to be injective. These two
maps have unique factorizations
\[
\uli k \lra \uli r \lra M \quad , \quad \uli k \lra \uli s \lra N \; ,
\]
of type \emph{selfic (surjective) map followed by injective map}.
Therefore we have a stratification of $Q=\emb(\uli k\,,\,M\times N)$ where the strata correspond to
certain pairs of selfic maps $(\uli k\to \uli r,\,\uli k\to \uli s\,)$\,. Now we apply proposition \ref{prop:hstrat} to this stratification to arrive at a weak equivalence
\[  \hocolimsub{[r]} (N\sE\sP_Q)_r ~\lra~\hocolimsub{[r]} (N \sP_Q)_r ~. \]
Unraveling definition~\ref{defn-facelocsmall} (of $\Lambda^\flat$) and using the description of $X\boxtimes^\pre Y$ from example \ref{expl-config}, we recognize this as the map of spaces
\begin{itemize}
\item[-] induced by $X\boxtimes^\pre Y\to Z$ as in example~\ref{expl-conftens}
\item[-] from the part of
$(\Lambda^\flat (X\boxtimes^\pre Y))_0$ sitting over $\uli k\in (N\fin)_0$
\item[-] to the part of $(\Lambda^\flat Z)_0 \simeq Z_0$
sitting over $\uli k\in (N\fin)_0$. \qed
\end{itemize}

\section{Property beta} \label{sec-locdetails}
Let $\sC$ be a small (discrete) category.

\begin{lem} \label{lem-faceloccplt} If $A$ is a fiberwise complete Segal space over $B=N\sC$ and if $\Lambda A$ is a Segal space, then
$\Lambda A$ is also fiberwise complete over $B$.
\end{lem}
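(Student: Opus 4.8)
The plan is to verify directly that the defining square for fiberwise completeness of $\Lambda A$ is homotopy cartesian, organizing the check fiberwise over $B_0=\ob\sC$. Concretely, we must show that the comparison map $(\Lambda A)_1^{\heq}\to (\Lambda A)_0\times_{B_0}B_1^{\heq}$ is a weak equivalence. Since $B=N\sC$ and $\sC$ is discrete, $B_1^{\heq}$ is the \emph{discrete} set of isomorphisms of $\sC$, and $d_1$ sends an isomorphism to its source. The target therefore splits as a disjoint union indexed by isomorphisms $\phi\co c\to c'$ of $\sC$, and the statement reduces to: for each such $\phi$, the map $d_1$ restricts to a weak equivalence from $E_\phi$, the space of weakly invertible $1$-simplices of $\Lambda A$ covering $\phi$, onto the subspace $(\Lambda A)_0|_c$ of objects lying over $c$.

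The second step is to reduce this to completeness of the homotopy fibers. Recall that $\Lambda A$ is conservative over $B$ by construction. For the component $\phi=\id_c$, conservativity applied to the monotone surjection $s_0\co[1]\to[0]$ asserts that $(\Lambda A)_0\to (\Lambda A)_1\times_{B_1}B_0$ is a weak equivalence; that is, the $1$-simplices of $\Lambda A$ covering identity morphisms are, up to homotopy, exactly the degenerate ones. Writing $F_c$ for the homotopy fiber of $\Lambda A$ over $c$ (a Segal space, since $\Lambda A$ is one), this says precisely that $E_{\id_c}\simeq (\Lambda A)_0|_c=(F_c)_0$ is completeness of $F_c$. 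For a general isomorphism $\phi\co c\to c'$, conservativity presents $\Lambda A$ as (the homotopical Grothendieck construction of) a functor $c\mapsto F_c$; since $\phi$ is invertible in $\sC$, transport along $\phi$ is an equivalence $F_c\simeq F_{c'}$, the weakly invertible $1$-simplices over $\phi$ are the essentially unique coCartesian lifts, and a direct invertibility argument (composing a lift over $\phi$ with one over $\phi^{-1}$ to land over $\id$) identifies $E_\phi\simeq (F_c)_0$ once $F_c$ is known to be complete. Thus fiberwise completeness of $\Lambda A$ is equivalent to completeness of every fiber $F_c$.

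The third step supplies completeness of the fibers, and this is where the hypothesis that $A$ is fiberwise complete is used. Restricting the fiberwise completeness square of $A$ to the $\id_c$ component shows exactly that the homotopy fiber of $A$ over $c$ is a complete Segal space. It therefore suffices to identify $F_c$ with this fiber of $A$, i.e.\ to show that conservatization does not alter the homotopy fibers over objects. Using that $A^!\to A$ is a degreewise weak equivalence and that $A^!\subseteq\Lambda A$, this comes down to a cofinality computation on Definition~\ref{defn-faceloc}: upon taking the homotopy fiber over the constant $r$-simplex at $c$, the factors $B_k$ in $A_\ell\times_{B_\ell}B_k$ collapse to points, and the inclusion into $\sE(r)^{\op}$ of the full subcategory on those objects $[r]\to[k]\xleftarrow{=}[\ell]$ becomes cofinal, so the homotopy colimit computing $(\Lambda A)_r|_c$ collapses to $(A^!)_r|_c\simeq A_r|_c$. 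Assembling these equivalences in $r$ gives $F_c\simeq$ (the fiber of $A$ over $c$), which is complete.

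The main obstacle is this last cofinality collapse: making precise that restricting the homotopy colimit of Definition~\ref{defn-faceloc} to a single fiber reduces it to the corresponding fiber of $A$, and that the resulting equivalences are compatible with the simplicial structure. I expect the smaller model $\Lambda^\flat$ of Definition~\ref{defn-facelocsmall}, together with its right adjoint $\sE_0(r)\to\sE(r)$, to make the cofinality argument tractable. The Grothendieck-fibration description of a conservative Segal space over a nerve, and the transport-along-$\phi$ equivalence for non-identity isomorphisms, are the other delicate points, but they are formal consequences of conservativity of $\Lambda A$ once completeness of the fibers is in hand, so the genuine work lies in the fiber-preservation step.
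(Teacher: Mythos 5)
Your reduction to components of $B_1^{\heq}$ indexed by isomorphisms of $\sC$, and your treatment of the identity components via conservativity of $\Lambda A$ (the degeneracy $(\Lambda A)_0\to(\Lambda A)_1\times_{B_1}B_0$ is an equivalence, so every $1$-simplex over an identity lies in the component of a degenerate one and $d_1$ restricted to these is an equivalence onto $(\Lambda A)_0$), are sound. But the two steps that are supposed to carry the rest of the argument both fail. First, the fiber-identification step is false: conservatization does change the homotopy fibers over objects of $B$. Take $B=\Delta[0]$, so that ``the fiber over the constant $r$-simplex'' is all of $A_r$; conservativity over $\Delta[0]$ forces every $u^*\co(\Lambda A)_k\to(\Lambda A)_\ell$ to be a weak equivalence, i.e.\ $\Lambda A$ is essentially constant. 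For $A=N(\{0<1\})$, a complete Segal space and hence fiberwise complete over $\Delta[0]$, $\Lambda A$ is the constant simplicial space on the contractible space $|A|$, whose fiber over the unique object is a point in each degree, whereas $A_0$ has two points. Accordingly, the full subcategory of $\sE(r)$ on the objects $[r]\to[k]\xleftarrow{=}[\ell]$ does not become cofinal after passing to fibers; if it did, $A^!\to\Lambda A$ would always be a degreewise equivalence and $\Lambda$ would do nothing.

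Second, for a non-identity isomorphism $\phi\co c\to c'$ you appeal to ``transport along $\phi$'' and ``essentially unique coCartesian lifts'', but conservativity does not make $\Lambda A\to N\sC$ a coCartesian fibration and provides no lifts of $\phi$ at all. For instance, with $\sC=E$ the walking isomorphism, the inclusion $\Delta[0]\to NE$ of one object is a conservative Segal space over $NE$ whose fibers over the two objects are the complete Segal spaces $\Delta[0]$ and $\emptyset$, yet it is not fiberwise complete, since $\phi$ has no lift; this also refutes your intermediate reduction ``fiberwise completeness of $\Lambda A$ is equivalent to completeness of every fiber''. The existence of weakly invertible lifts of non-identity isomorphisms is exactly where the hypothesis on $A$ must enter, and your proposal never uses it there. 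The repair: given $y\in(\Lambda A)_0$ over $c$, use remark~\ref{rem-pinought} to replace $y$, up to path component, by the image of some $x\in A_0$ over $c$; fiberwise completeness of $A$ supplies a weakly invertible $1$-simplex $g$ of $A$ over $\phi$ with source $x$, whose image in $(\Lambda A)_1$ is again weakly invertible; composition with this image in the Segal space $\Lambda A$ then identifies the space of weakly invertible $1$-simplices over $\phi$ with source $y$ with the corresponding space over an identity, which is contractible by your first step. (The paper itself defers the proof to \cite[\S8]{BoavidaWeissLong}, where the argument follows these lines.)
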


\proof See \cite[\S8]{BoavidaWeissLong}.

\begin{defn} \label{defn-transp} \cite[\S8]{BoavidaWeissLong}. {\rm Suppose that $w\co A\to B$ makes $A$ into a fiberwise
complete Segal space over $B$. We say that $w\co A\to B$ has
\emph{property beta} if, for $f\in A_1$ with source $x=d_0f\in A_0$ and target $y=d_1f\in A_0$
such that $w(f)\in B_1=(N\sC)_1$ is an identity morphism, the map
$\Lambda(A\overcat x) \to \Lambda(A\overcat y)$
induced by $f$ is a weak equivalence in degree 0.
}
\end{defn}

\begin{thm} \label{thm-transp} Under conditions as in definition~\emph{\ref{defn-transp}}, if $A\to B$ has property beta,
then $\Lambda A$ is a fiberwise complete Segal space over $B$.
Moreover, for any $z\in A_0$ there is a canonical simplicial map
from $\Lambda(A\overcat z)$ to $(\Lambda A\overcat z)$
which is a weak equivalence in degree $0$.
\end{thm}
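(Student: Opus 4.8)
The plan is to deduce the Segal condition for $\Lambda A$ from property beta, routed through the degree-$0$ comparison of slices asserted in the ``moreover'' part, and then to invoke Lemma~\ref{lem-faceloccplt} to upgrade this to fiberwise completeness.

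First I would dispose of fiberwise completeness: by Lemma~\ref{lem-faceloccplt} it is enough to prove that $\Lambda A$ is a Segal space. Since the reference map $\Lambda A\to N\sC$ is a degreewise fibration onto a simplicial set, and $N\sC$ itself satisfies the Segal condition, I may check the Segal maps of $\Lambda A$ fiberwise over the simplices of $N\sC$. It also suffices to treat the $2$-fold Segal map $(\Lambda A)_2\to (\Lambda A)_1\times^h_{(\Lambda A)_0}(\Lambda A)_1$, the higher Segal maps following formally from this one.

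Next I would construct the canonical map of the second assertion. The forgetful map $A\overcat z\to A$ is a map of simplicial spaces over $B$, so functoriality of $\Lambda$ (Definition~\ref{defn-faceloc}) produces $\Lambda(A\overcat z)\to\Lambda A$. The tautological morphisms $a\to z$ carried by the objects of $A\overcat z$ assemble, after applying $\Lambda$, into a coherent morphism to the image $\bar z\in(\Lambda A)_0$; this lifts the previous map into the slice, giving the desired simplicial map $\Lambda(A\overcat z)\to(\Lambda A\overcat z)$. To see it is an equivalence in degree $0$, I would compute both sides with the smaller model $\Lambda^\flat$ of Definition~\ref{defn-facelocsmall} (which is weakly equivalent to $\Lambda$, since $\sE_0(0)\hookrightarrow\sE(0)$ admits a right adjoint). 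Over a fixed object $c\in\sC$ beneath $z$, both homotopy colimits become realizations of the relevant fibers, and the comparison map is assembled from the transport maps $\Lambda(A\overcat x)_0\to\Lambda(A\overcat y)_0$ attached to morphisms $f\co x\to y$ lying over identities of $\sC$. Property beta (Definition~\ref{defn-transp}) says exactly that these transport maps are equivalences, so a term-by-term argument yields the degree-$0$ equivalence.

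Finally I would feed this comparison into the $2$-fold Segal map. Over a fixed middle vertex $\bar z\in(\Lambda A)_0$, both sides of that map are built from the degree-$0$ and degree-$1$ slice data of $\Lambda A$ at $z$ together with the mirror (coslice) data; the equivalence just proved replaces the unwieldy $(\Lambda A\overcat z)$ by $\Lambda(A\overcat z)$, where one can compute directly because $A$ is already a Segal space and $\Lambda$ interacts with its defining homotopy colimit through the cofinalities above. I expect the main obstacle to be precisely this interchange: the homotopy colimit defining $\Lambda$ does not a priori commute with the homotopy pullback in the Segal product, and it is property beta---guaranteeing that transport along the inverted fiber morphisms is an equivalence---that repairs the discrepancy and lets the Segal comparison close. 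A secondary point, needed to treat the middle vertex symmetrically, is the coslice analogue of property beta, which I would derive from the slice statement together with fiberwise completeness.
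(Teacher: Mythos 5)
A preliminary caveat: the paper does not actually prove this theorem here --- its proof reads ``See \cite[\S8]{BoavidaWeissLong}'' --- so there is no in-document argument to compare yours against. Judged on its own terms, your proposal assembles the right scaffolding: the reduction of fiberwise completeness to the Segal condition via Lemma~\ref{lem-faceloccplt}, the construction of the canonical map $\Lambda(A/z)\to(\Lambda A)/z$ from functoriality of $\Lambda$, the passage to $\Lambda^\flat$ using the cofinality of $\sE_0(r)\subset\sE(r)$, and the recognition that property beta is the mechanism that lets the homotopy colimits defining $\Lambda$ interact with homotopy fibers. But the two steps that carry the theorem are asserted rather than proved.

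Concretely: (a) the degree-$0$ comparison requires identifying the homotopy fiber of $(\Lambda A)_1\to(\Lambda A)_0$ over $\bar z$, which is a map of homotopy colimits taken over \emph{different} index categories ($\sE(1)^{\op}$ versus $\sE(0)^{\op}$). ``A term-by-term argument'' does not suffice; one needs a Puppe-type descent statement asserting that the homotopy colimit of the fibers computes the fiber of the homotopy colimit, whose hypothesis is that the squares in the diagram of fibers are homotopy cartesian --- and verifying that hypothesis, by sorting out exactly which morphisms of the index category induce transport maps $\Lambda(A/x)_0\to\Lambda(A/y)_0$ over identities of $\sC$, is the actual content. (b) For the Segal condition you write that you ``expect the main obstacle to be precisely this interchange'' and that property beta ``repairs the discrepancy''; an obstacle flagged as unresolved is a gap, not a proof. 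Moreover, your route through the middle vertex needs a coslice analogue of property beta, and the claim that this follows from the slice statement ``together with fiberwise completeness'' is unsupported: fiberwise completeness of $A$ over $N\sC$ does not make morphisms lying over identities weakly invertible, and $\Lambda A$ is not yet known to be complete at that point in the argument. You would need either to supply that derivation or to rearrange the fibering (e.g.\ over an outer vertex) so that only slices occur. Finally, the higher Segal maps for $n\ge 3$ do not follow formally from the case $n=2$; the fiberwise argument must be run for each $n$.
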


\proof See \cite[\S8]{BoavidaWeissLong}.

\begin{rem} \label{rem-pinought} \cite[\S8]{BoavidaWeissLong}. {\rm In diagram~(\ref{eqn-faceloc}), the map from
$\pi_0 A^!_0\cong \pi_0 A_0$ to $\pi_0((\Lambda A)_0)$
determined by the horizontal arrows is surjective. This can be seen from the definition
\[  (\Lambda A)_0= \hocolimsub{[0]\to[k]\leftarrow[\ell]} A_\ell\times_{B_\ell} B_k~. \]
Every object $[0]\to[k]\leftarrow[\ell]$ of the indexing category is the target of some
morphism from the object $[0]\to [0]\leftarrow [0]$.
}
\end{rem}

Throughout, we use the notation of example~\ref{expl-conftens}. Let $W=X\boxtimes^\pre Y$.

\begin{lem}\label{lem-beta}
The map $W \to N\fin$ has property beta.
\end{lem}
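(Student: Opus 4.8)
The plan is to test property beta for $W=X\boxtimes^\pre Y$ by comparing the comma constructions of $W$ against those of $Z=\config(M\times N)$, exploiting the fact that the fusion recorded by the reference map $W\to N\fin$ is \emph{genuine} collision in $M\times N$ and is therefore invertible inside the conservative Segal space $Z$.

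First I would fix a morphism $\phi\in W_1$ from $w^0=d_0\phi$ to $w^1=d_1\phi$ whose image $w(\phi)\in(N\fin)_1$ is an identity, say on $\uli k$. Using the particle description $W=N(\sA\bar\times\sB)$ of example~\ref{expl-config}, such a $\phi$ consists of a morphism $f_X\co x^0\to x^1$ in $X$ and a morphism $f_Y\co y^0\to y^1$ in $Y$ (fusing points in $M$, respectively in $N$) together with the identity on $\uli k$. Because the resulting map $\uli k\to\uli r^1\times\uli s^1$ is still injective, the two fusion patterns are compatible in such a way that no two of the $k$ points ever collide in $M\times N$. Pushing $\phi$ forward along the comparison functor $\rho\co W\to Z$ of example~\ref{expl-conftens} (defined on an equivalent subcategory, which is harmless up to weak equivalence) yields $\bar\phi=\rho(\phi)\co\bar w^0\to\bar w^1$ in $Z_1$, again lying over an identity of $\uli k$. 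Since $Z$ is conservative over $N\fin$, lemma~\ref{lem-hensel} shows $\bar\phi$ is weakly invertible, so post-composition with $\bar\phi$ induces a degreewise weak equivalence $Z\overcat\bar w^0\to Z\overcat\bar w^1$; in particular $(Z\overcat\bar w^0)_0\to(Z\overcat\bar w^1)_0$ is a weak equivalence.

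Next I would establish the comma-category analogue of lemma~\ref{lem-firststep}: for each $w\in W_0$ with image $\bar w\in Z_0$, the functor $\rho$ induces a weak equivalence
\[ \Lambda(W\overcat w)_0\ \xrightarrow{\ \simeq\ }\ \Lambda(Z\overcat\bar w)_0\ \simeq\ (Z\overcat\bar w)_0, \]
the second equivalence holding because $Z\overcat\bar w$ is again conservative over $N\fin$, so that $\Lambda$ alters it only up to degreewise equivalence (by the discussion following proposition~\ref{prop-leftadjoint}). The proof is the relative form of the argument for lemma~\ref{lem-firststep}. Unravelling $\Lambda^\flat$ (definition~\ref{defn-facelocsmall}) together with the identification of $W\overcat w$ as a full Segal subspace of $(X\overcat x)\boxtimes^\pre(Y\overcat y)$ from remark~\ref{rem-comma}, the space $(\Lambda^\flat(W\overcat w))_0$ lying over $\uli k$ is the realization of the reverse exit path category of the relevant embedding space of morphisms into $\bar w$, stratified by product-fusion type; proposition~\ref{prop:hstrat} then identifies this realization with the full path space, which is $(Z\overcat\bar w)_0$ over $\uli k$. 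As in lemma~\ref{lem-firststep}, one must check that this target-relative embedding space is homotopically stratified and locally contractible.

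Finally I would assemble the pieces. Naturality of $\rho$ and of $\Lambda$ in $w$ (via post-composition with $\phi$, resp.\ $\bar\phi$) gives a commuting square
\[
\begin{CD}
\Lambda(W\overcat w^0)_0 @>{\phi_*}>> \Lambda(W\overcat w^1)_0 \\
@V{\simeq}VV @VV{\simeq}V \\
(Z\overcat\bar w^0)_0 @>{\bar\phi_*}>> (Z\overcat\bar w^1)_0
\end{CD}
\]
whose vertical maps are weak equivalences by the previous paragraph and whose bottom map is a weak equivalence by the first paragraph. Hence the top map $\phi_*\co\Lambda(W\overcat w^0)_0\to\Lambda(W\overcat w^1)_0$ is a weak equivalence, which is exactly \emph{property beta} for $W\to N\fin$. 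The main obstacle is the middle step: setting up the target-relative stratification on the appropriate embedding space, verifying the hypotheses of proposition~\ref{prop:hstrat} in this relative setting, and matching its reverse exit path category with $(\Lambda^\flat(W\overcat w))_0$ through remark~\ref{rem-comma}. Everything else is formal once $Z$ is known to be conservative.
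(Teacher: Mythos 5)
Your overall architecture is sound and genuinely different from the paper's at the final step: where you invoke conservativity of $Z=\config(M\times N)$ to see that $\bar\phi$ is weakly invertible (lemma~\ref{lem-hensel}) and hence that $(Z\overcat\bar w^0)_0\to(Z\overcat\bar w^1)_0$ is an equivalence, the paper never appeals to $Z$ at this point; it identifies $\Lambda(W\overcat w)_0$ with $\config(T)_0$ for $T$ a union of components of a product of small tubular neighborhoods $U\times V$, and checks directly that $\config(T)_0\to\config(T')_0$ is an equivalence because $T\to T'$ is an inclusion of such unions inducing a bijection on $\pi_0$. Both endgames work, and yours is arguably cleaner once the middle step is in place.

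The gap is in the middle step, which you rightly flag as the main obstacle but underestimate. You propose to prove $\Lambda(W\overcat w)_0\simeq(Z\overcat\bar w)_0$ by a ``relative form'' of the exit-path argument of lemma~\ref{lem-firststep}, stratifying the space of morphisms of $Z$ into $\bar w$ by the product-fusion type of the source. But the analogy breaks already at the level of objects: in lemma~\ref{lem-firststep} the degree-$0$ space of $W$ over $\uli k$ is literally the disjoint union of the strata of $\emb(\uli k, M\times N)$, whereas here the points of your stratified space are morphisms of $Z$ into $\bar w$, i.e.\ single paths in $(M\times N)^{j}$, while the $0$-simplices of the simplicial space computing $(\Lambda^\flat(W\overcat w))_0$ are morphisms of $W$ into $w$, i.e.\ pairs of paths in $M^{r}$ and $N^{s}$ together with combinatorial data. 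Matching these stratum by stratum is essentially a degree-$1$ comparison between $W$ and $Z$, which is what lemma~\ref{lem-beta} is ultimately needed to establish (via theorem~\ref{thm-transp} in the proof of theorem~\ref{thm-prods}); as sketched, your argument is circular, or at least conceals the main difficulty in a step described as routine. The paper avoids this by citing the local structure theorem \cite[Corollary 3.6]{BoavidaWeissLong}, which identifies $X\overcat x$ and $Y\overcat y$ with $\config(U)$ and $\config(V)$ for tubular neighborhoods $U$ and $V$, so that only the already proved degree-$0$ statement of lemma~\ref{lem-firststep} applied to the manifolds $U$ and $V$, plus the path-component bookkeeping of remark~\ref{rem-comma}, is needed. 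That ingredient, or a genuine substitute for it, is missing from your proposal.
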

\begin{proof}
Choose $w \in W_0$ and let $x \in X_0$ and $y \in Y_0$ be the images of $w$. We saw in
remark \ref{rem-comma} that $W/w$ is a full Segal subspace of $X/x \boxtimes^\pre Y/y$. By using the
formula defining $\Lambda^\flat$ it follows that the induced map
\[
\Lambda(W/w) \to \Lambda(X/x \boxtimes^\pre Y/y)
\]
is, in degree 0 and up to weak equivalence, an inclusion of path components. In more detail, $(\Lambda^\flat(X/x \boxtimes^\pre Y/y))_0$ is the homotopy colimit of a simplicial space with space of $n$-simplices given by the pullback
\[
N\fin_0 \times_{N\fin_n} (X/x \boxtimes^\pre Y/y)_n \; .
\]
By remark~\ref{rem-comma}, especially the last sentences, this simplicial space is the disjoint union of two simplicial subspaces. One of these simplicial subspaces is the one whose homotopy colimit defines $(\Lambda^\flat(W/w))_0$.

By \cite[Corollary 3.6]{BoavidaWeissLong}, the Segal spaces $X/x$ and $Y/y$ are weakly equivalent to configuration categories $\config(U)$ and $\config(V)$ where $U\subset M$ and $V\subset N$ are small tubular neighborhoods of $x$ and $y$, respectively. It follows from lemma \ref{lem-firststep} that the map
\[
\Lambda(\config(U) \boxtimes^\pre \config(V)) \lra \config(U \times V)
\]
is a weak equivalence in degree 0. Let $T\subset U\times V$ be the union of the path components determined by
$w$. More precisely, $w$ determines an object of $\boxfin$, alias diagram $\uli r \leftarrow \uli k \to \uli s$
in $\fin$, such that the map $\uli k\to \uli r\times\uli s$ is injective; here $\uli r$ is identified with
$\pi_0(U)$ and $\uli s$ is identified with $\pi_0(V)$, so that $\uli k$ is identified with a subset of $\pi_0(U\times V)$.
But the gist of the above observations relying on remark~\ref{rem-comma}
is that we have identified $(\Lambda(W/w))_0$ with the homotopy pullback of
\[
(\Lambda(\config(U) \boxtimes^\pre \config(V)))_0 \lra \config(U \times V)_0 \longleftarrow \config(T)_0 \,.
\]
Therefore the degree $0$ part of $\Lambda(W/w)$ is now identified with the degree 0 part of $\config(T)$,
up to weak equivalence.

Given a morphism $w \to w^\prime$ in $W$ over an identity morphism in $\fin$, the tubular neighborhoods $U,V,U',V'$ and $T,T'$ may be chosen (up to replacing $w\to w'$ by another element in the same path component of $W_1$)
so that $U\subset U'$, $V\subset V'$, $T\subset T'$, and the induced map
$\Lambda(W/w) \to \Lambda(W/w^\prime)$
is, in degree 0 and up to weak equivalence, identified with the map $\config(T)\to \config(T')$.
Therefore it is a weak equivalence in degree 0. Note in passing that $U'$ can have fewer path components than $U$,
and $V'$ can have fewer path components than $V$, but the inclusion $T\to T'$ induces a bijection on $\pi_0$
(since $w\to w'$ covers an identity morphism in $\fin$).
\end{proof}

\begin{proof}[Proof of theorem~\ref{thm-prods}]
It was already shown in lemma~\ref{lem-firststep}
that our map from $\Lambda W$ to $Z=\config(M\times N)$ is a
weak equivalence in degree $0$. Since $W \to N \fin$ has property beta, we know that $\Lambda W$ is a Segal space. Therefore all we need to show is that the map $\Lambda W \to Z$ is a weak equivalence in degree $1$.

Consider the square
\begin{equation*}
	\begin{tikzpicture}[descr/.style={fill=white}, baseline=(current bounding box.base)] ]
	\matrix(m)[matrix of math nodes, row sep=2.5em, column sep=2.5em,
	text height=1.5ex, text depth=0.25ex]
	{
	(\Lambda W)_1 & Z_1 \\
	(\Lambda W)_0 & Z_0\\
	};
	\path[->,font=\scriptsize]
		(m-1-1) edge node [auto] {} (m-1-2)
		(m-1-1) edge node [left] {target} (m-2-1)
		(m-1-2) edge node [auto] {target} (m-2-2)
		(m-2-1) edge node [auto] {} (m-2-2);
	\end{tikzpicture}
\end{equation*}
For $w \in W_0$ there is the induced map between vertical homotopy fibers
\[
( \Lambda W / w)_0 \to (Z/z)_0
\]
where $z$ is the image of $w$. It is easy to see that $Z/z$ is weakly equivalent to the configuration category $\config(T)$ where $T$ is a certain union of components of $U \times V$ determined by $w$, as in the proof of lemma~\ref{lem-beta}.
By construction, the map $\Lambda(W/w) \to \config(U \times V)$ factors through $\config(T)$ and this factorization
gives a weak equivalence $\Lambda(W/w)_0\to \config(T)_0$ as seen in the proof of lemma~\ref{lem-beta}.

Applying lemma \ref{lem-beta} and theorem \ref{thm-transp}, $( \Lambda W/w)_0 \simeq \Lambda(W/w)_0$, and so we have that the map between vertical homotopy fibers in the square (over $w \in W_0$) is a weak equivalence. By remark \ref{rem-pinought}, we conclude that the square is homotopy cartesian. Therefore  $\Lambda W \to Z$ is a weak equivalence in degree $1$.
\end{proof}

As the reader will have noticed, theorem~\ref{thm-prods} is a more precise reformulation of theorem~\ref{thm:main}.
We can tick off theorem~\ref{thm:main} and turn to the proof theorem~\ref{thm:mainorb}.

\begin{proof}[Proof of theorem~\emph{\ref{thm:mainorb}}] Write $K=\TOP(M)$ and $L=\TOP(N)$. With the particle models
for $\config(M)$ and $\config(N)$, the actions of $K$ on $\config(M)$ and of $L$ on $\config(N)$ that
we require are obvious. There is a homotopy fiber sequence
\[ \config(M)\boxtimes^\pre \config(N) \lra \config(M)_{hK} \boxtimes^\pre \config(N)_{hL} \lra B(K\times L)\times N\fin \]
of simplicial spaces over $N\fin$. Here $B(K\times L)\times N\fin$ is fiberwise constant over $N\fin$.
Applying $\Lambda$ to each term preserves the homotopy fiber sequence status. (Indeed, $\Lambda$
was defined in terms of homotopy colimit constructions and homotopy colimits are stable under homotopy base change.) Therefore we obtain a homotopy fiber sequence
\[
\Lambda(\config(M)\boxtimes^\pre \config(N)) \lra \Lambda(\config(M)_{hK} \boxtimes^\pre \config(N)_{hL}) \lra B(K\times L)\times N\fin \]
of simplicial spaces over $N\fin$. This has several good consequences; in particular we can deduce that
$\Lambda(\config(M)_{hK} \boxtimes^\pre \config(N)_{hL})$ is fiberwise complete Segal over $N\fin$. --- The map
\[ \config(M)_{hK} \boxtimes \config(N)_{hL} \lra \config(M \times N)_{h(K\times L)} \]
that we require will now be constructed like the map in theorem~\ref{thm-prods}. We begin by observing that $\config(M)_{hK}$ and $\config(N)_{hL}$ are fiberwise complete Segal spaces over $N\fin$. (For the Segal property, this hinges on fact that taking homotopy orbits commutes with homotopy pullbacks.) So, by proposition \ref{prop-preboxcomplete}, it follows that
\[  \config(M)_{hK} \boxtimes^\pre \config(N)_{hL} \]
is a fiberwise complete Segal space over $N\fin$.
There is a map
\[ \config(M)_{hK} \boxtimes^\pre \config(N)_{hL} \lra \config(M \times N)_{h(K\times L)} \]
which can be constructed by a mild adaptation of example~\ref{expl-conftens}. It is a map over $N\fin$ and so we can
apply conservatization $\Lambda$ to it. The effect on the right-hand side is negligible since that
was already conservative over $N\fin$. Regarding the left-hand side, we know already that $\Lambda$ applied to it is
fiberwise complete Segal over $N\fin$; therefore we may call it $\config(M)_{hK} \boxtimes \config(N)_{hL}$
and we now have our map
\[   \config(M)_{hK} \boxtimes \config(N)_{hL} \lra  \config(M \times N)_{h(K\times L)}\,. \]
The reasoning so far tells us also that this map is the middle column in a commutative diagram
\[
\begin{tikzpicture}[descr/.style={fill=white}, baseline=(current bounding box.base)],
	\matrix(m)[matrix of math nodes, row sep=2.5em, column sep=2.5em,
	text height=1.5ex, text depth=0.25ex]
	{
	  \config(M) \boxtimes \config(N) &  \config(M)_{hK} \boxtimes \config(N)_{hL} &  B(K\times L) \\
	 \config(M \times N) & \config(M \times N)_{h(K\times L)} &  B(K\times L) \\
	};
	\path[->,font=\scriptsize]
		(m-1-1) edge node [right] {} (m-1-2)
		(m-1-2) edge node [right] {} (m-1-3)
		(m-1-1) edge node [right] {} (m-2-1)
		(m-2-2) edge node [above] {} (m-2-3)
		(m-2-1) edge node [auto] {} (m-2-2)
		(m-1-2) edge node [auto] {} (m-2-2);
	\draw [double equal sign distance] (m-1-3) to (m-2-3) node [anchor=mid west] {$$};
\end{tikzpicture}
\]
where the rows are homotopy fiber sequences. Since the left column is a weak equivalence, the middle column is a weak
equivalence, too. \end{proof}

\section{Fibrant replacement solves some problems} \label{sec-locmodel}
The conservatization functor $\Lambda$ is related to the identity functor via a zig-zag.
So it is not quite a fibrant replacement in the appropriate model category, though for all purposes it serves as such.
In this section, we make this statement a little more explicit. One reason for having model category formulations is that universal properties are easier to formulate and employ, in that they do not involve zig-zags.

Throughout this section, $B$ denotes a fixed simplicial (discrete) space which is the nerve of a small category. The category of simplicial spaces over
$B$ has two model structures, the injective and the projective, where the weak equivalences are given degreewise (forgetting the
reference maps to $B$). We refer to each of these generically as the degreewise model structure. Starting from the degreewise model
structure, we can localize to obtain a new model structure where the fibrant objects are the conservative simplicial spaces over
$B$ (which are moreover fibrations in the degreewise model structure). This is the left Bousfield localization at the maps of the form $\Delta[k] \xrightarrow{} \Delta[\ell] \xrightarrow{}  B$ where the first map is induced by a surjection in $\Delta$.

\begin{prop}
There is a zigzag of degreewise weak equivalences between $\Lambda(X)$ and $\mathbf{\Lambda}(X)$, natural in $X$, where
$\mathbf{\Lambda}$ is any fibrant replacement functor in the model structure for conservative simplicial spaces over $B$.
\end{prop}
\begin{proof}
In the diagram
\[
\mathbf{\Lambda}(X) \gets \mathbf{\Lambda}(X)^! \to \Lambda(\mathbf{\Lambda}(X))
\]
both arrows are (degreewise) weak equivalences; the right-hand one is so because $\mathbf{\Lambda}(X) \to B$ is conservative.
Moreover, the map $\Lambda(X) \to \Lambda(\mathbf{\Lambda}(X))$ is a weak equivalence between conservative objects (as one can
check by mapping it to any other conservative object and using that $\Lambda$ is a homotopical left adjoint by proposition \ref{prop-leftadjoint}), and hence a degreewise weak equivalence.
\end{proof}

\begin{defn} {\rm
For a simplicial space $X$ over $B$, let $L(X)$ be the simplicial space defined as the homotopy colimit of
\[
X \to K(X) \to \mathbf{\Lambda}K(X) \to K\mathbf{\Lambda}K(X) \to \mathbf{\Lambda}K\mathbf{\Lambda}K(X) \to \cdots
\]
(over $B$). Clearly, $L(X)$ is naturally weakly equivalent to $(\Lambda K)^{\infty}(X)$.}
\end{defn}

In what follows, we call a simplicial map $X \to B$ \emph{local} if it is a conservative, fiberwise complete Segal space (and the
map is a fibration in the degreewise model structure). There is a model structure on the category of simplicial spaces over $B$
where the fibrant objects are the local ones. The generating trivial cofibrations in the degreewise (meaning, projective or injective)
model structure are also generating trivial cofibrations for this model structure. In addition, the maps in the category of
simplicial spaces over $B$:
\begin{itemize}
\item
$
\{ \Delta[0] \to E \to B \}
$
, where $E$ denotes the groupoid with two objects $x, y$ and exactly two non-identity isomorphisms $x \to y$ and $y \to x$,
\item
$
 \{ S(n) \to \Delta[n] \to B: n \geq 2\}
$
, where $S(n)$ denotes the homotopy colimit of
$$
\Delta[1] \xleftarrow{d_1} {\Delta[0]} \xrightarrow{d_0} \dots \xleftarrow{d_1} {\Delta[0]}  \xrightarrow{d_0} \Delta[1] \; ,
$$
\item
$
\{ \Delta[k] \xrightarrow{f^*} \Delta[\ell] \to B : f \mbox{ surjective } \} \, ,
$
\end{itemize}
are the newly added generating trivial cofibrations (note that these maps are not cofibrations in the projective sense,
so in that case we replace each of them by cofibrations between cofibrant objects first). These account for the completeness
condition, Segal condition, and conservativity condition, respectively.
\begin{prop}\label{prop:fibrepl}
The functor $L$ is a fibrant replacement. In other words:
\begin{enumerate}
\item $L(X) \to B$ is local, and
\item the inclusion $X \to L(X)$ is a local weak equivalence, that is, for every local $Z \to B$, the induced map
\[
\RR \map_B(L(X), Z) \to \RR \map_B(X, Z)
\]
is a weak equivalence.
\end{enumerate}
\end{prop}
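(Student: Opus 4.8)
The plan is to prove the two clauses separately, treating (1) as a consequence of the stability of the local conditions under sequential homotopy colimits, and (2) by a tower argument on derived mapping spaces. Throughout I would use the already-recorded identification of $L(X)$ with $(\Lambda K)^\infty(X)$, together with the fact that $K$ is the derived unit for the completeness-and-Segal localization and $\mathbf{\Lambda}$ is a fibrant replacement for the conservatization localization. The two localizations are both left Bousfield localizations of the degreewise model structure at a \emph{sub}class of the generating trivial cofibrations listed above the proposition, and their union is the local model structure; this is the one structural fact I would lean on repeatedly.

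For clause (1), I would argue that $L(X)\to B$ is conservative and fiberwise complete Segal, the fibration condition being arranged afterwards by a degreewise fibrant replacement, which changes nothing up to degreewise weak equivalence. Conservativity of $L(X)$ is the stability statement already quoted before definition~\ref{defn-unwieldy}: a sequential homotopy colimit of objects conservative over $B$ is again conservative over $B$. For the fiberwise complete Segal property I would invoke the same principle: the Segal maps and the completeness square are built from finite homotopy limits of the spaces in each degree, and sequential (filtered) homotopy colimits of spaces commute with finite homotopy limits; hence the relevant comparison maps, being weak equivalences at each stage $A_n$ of the defining sequence, remain weak equivalences after passage to the colimit. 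This is exactly what forces the interleaving of $K$ and $\mathbf\Lambda$ to converge to an object that is simultaneously conservative and fiberwise complete Segal.

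For clause (2), I would first observe that every map in the sequence $X\to K(X)\to\mathbf\Lambda K(X)\to\cdots$ is a local weak equivalence. Indeed, each such map is either a unit $A\to K(A)$, a weak equivalence in the complete Segal model structure, or a unit $A\to\mathbf\Lambda(A)$, a weak equivalence in the conservatization model structure; since both of those structures are localizations of the degreewise one at a subclass of the local generating trivial cofibrations, their weak equivalences are local weak equivalences. Now fix a local object $Z\to B$. Because $Z$ is fibrant for the local model structure, each map $\RR\map_B(A_{n+1},Z)\to\RR\map_B(A_n,Z)$ is a weak equivalence, so the tower $\{\RR\map_B(A_n,Z)\}_n$ is a tower of weak equivalences. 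Since $L(X)$ is the homotopy colimit of the $A_n$, and $\RR\map_B$ carries a sequential homotopy colimit in the first variable to the homotopy limit of the mapping spaces, we obtain $\RR\map_B(L(X),Z)\simeq\holim_n\RR\map_B(A_n,Z)\simeq\RR\map_B(X,Z)$, and this equivalence is induced by the structure map $X\to L(X)$. That is precisely clause (2).

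The main obstacle is clause (1): one must be sure that the infinite interleaving of the two fibrant replacements really does converge to an object that is fibrant for both conditions at once. The delicate point is the fiberwise complete Segal property, since a single application of $\mathbf\Lambda$ can destroy it; the argument rests on the commutation of the finite homotopy limits used to phrase the Segal and completeness conditions with sequential homotopy colimits of spaces. Once that commutation is secured, clause (2) is essentially formal, the only bookkeeping being cofibrancy of the $A_n$ and fibrancy of $Z$, so that the derived mapping space out of the homotopy colimit is correctly computed as the homotopy limit of the derived mapping spaces.
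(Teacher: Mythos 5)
Your argument for clause (2) is essentially the paper's: both identify the maps in the tower as local weak equivalences (units for the two constituent localizations) and pass to the limit of the resulting tower of equivalent derived mapping spaces. For clause (1), however, you take a genuinely different route. The paper does not verify the locality conditions on $L(X)$ directly; it runs the small object argument, using that the domain of each generating trivial cofibration is small so that any map from it into $L(X)$ factors through a finite stage $(\mathbf{\Lambda}K)^iX$ or $K(\mathbf{\Lambda}K)^iX$, where the required lift exists because that stage is fibrant for the relevant constituent localization. Your approach instead checks conservativity and the fiberwise complete Segal condition on the colimit by commuting the defining finite homotopy limits past the sequential homotopy colimit (passing to the appropriate cofinal subtower in each case). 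Both work; the paper's argument is more formal and needs nothing about the shape of the locality conditions beyond smallness of the domains of the generating trivial cofibrations, while yours explains more directly why the interleaving converges and matches the heuristic the paper already uses for conservativity before definition~\ref{defn-unwieldy}.

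One point in your clause (1) needs shoring up: the completeness square involves $Y_1^{\heq}$, which is not a finite homotopy limit of the spaces $Y_n$ but a union of path components of $Y_1$ singled out by a weak-invertibility condition. To conclude that completeness survives the sequential homotopy colimit you must additionally check that $(\hocolim_n Y^{(n)})_1^{\heq}\simeq \hocolim_n (Y^{(n)})_1^{\heq}$; this is true, because the data witnessing weak invertibility of a given component is detected at a finite stage of the tower, but it is not covered by the blanket appeal to commutation of finite homotopy limits with filtered colimits. With that supplement, and the routine remark that the degreewise fibration condition can be arranged afterwards without changing anything up to degreewise weak equivalence, your proof is complete.
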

\begin{proof}
We begin with (2). Let $L^i(X)$ denote the $i$-th space in the tower defining $L(X)$. It suffices to show that for every $i \geq 0$ and
every local $Z \to B$, the map
\[
\RR \map_{B}(L^i(X), Z) \to \RR \map_{B}(X, Z)
\]
is a weak equivalence. Arguing inductively, this amounts to showing that the maps
\[
\RR \map_{B}(\mathbf{\Lambda}K X, Z) \to \RR \map_{B}(KX, Z) \to \RR \map_{B}(X, Z)
\]
are weak equivalences. And this is clear, because $Z$ is local.

The proof of (1) is essentially an application of the small object argument. We want to show that for every generating trivial
cofibration $\iota : A \to C$ (over $B$) and map $\gamma : A \to L(X)$ (over $B$), there is a lift as pictured:
\begin{equation*}
	\begin{tikzpicture}[descr/.style={fill=white}, baseline=(current bounding box.base)] ]
	\matrix(m)[matrix of math nodes, row sep=2.0em, column sep=2.0em,
	text height=1.5ex, text depth=0.25ex]
	{
	A & L(X) \\
	C & \\
	};
	\path[->,font=\scriptsize]
		(m-1-1) edge node [auto] {$\gamma$} (m-1-2)
		(m-1-1) edge node [left] {$\iota$} (m-2-1);
	\path[->, dashed,font=\scriptsize]
		(m-2-1) edge node [auto] {} (m-1-2);
	\end{tikzpicture}
\end{equation*}
Our generating trivial cofibrations are either trivial cofibrations in the model structure
built on the concept of conservative simplicial space over $B$, or they are trivial cofibrations in the model structure built on the concept of fiberwise complete Segal space over $B$. In the first case, we can choose a
factorization of the map $\gamma$ through a map of the form
$A \to (\mathbf{\Lambda}K)^i X$. Then such a lift (to $(\mathbf{\Lambda}K)^i X$) exists by definition. In the second case
we can choose factorization of the map $\gamma$ through a map of the form
$A \to K(\mathbf{\Lambda}K)^i X$. Then such a lift exists by definition.
Thus, $L(X)$ has the right lifting property with respect to all generating trivial
cofibrations.
\end{proof}

\begin{cor}\label{cor:fibrepl} Suppose that $X \to B$ is such that $\Lambda K(X) \to B$ is already local (i.e. a conservative,
fiberwise complete Segal space over $B$). Then the inclusion $\mathbf{\Lambda}K(X) \to L(X)$ is a degreewise weak equivalence.
\end{cor}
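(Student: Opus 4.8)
The plan is to show that every transition map in the tower defining $L(X)$ \emph{past} the term $\mathbf{\Lambda}K(X)$ is a degreewise weak equivalence. Once this is known, the inclusion $\mathbf{\Lambda}K(X)\to L(X)$ is a degreewise weak equivalence, because a sequential homotopy colimit of a diagram whose transition maps are eventually degreewise weak equivalences is canonically weakly equivalent to its stable value: the tail of the tower beginning at $\mathbf{\Lambda}K(X)$ is cofinal, and the inclusion of that stable term realizes the equivalence. So the whole problem reduces to analyzing the two kinds of transition maps that occur from $\mathbf{\Lambda}K(X)$ onward, namely the $K$-units $\eta$ and the $\mathbf{\Lambda}$-units.

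First I would record that $\mathbf{\Lambda}K(X)$ is local. By construction $\mathbf{\Lambda}$ is a fibrant replacement in the model structure for conservative simplicial spaces over $B$, so $\mathbf{\Lambda}K(X)$ is conservative over $B$ (and a degreewise fibration). For the remaining conditions I would invoke the zigzag of natural degreewise weak equivalences between $\Lambda$ and $\mathbf{\Lambda}$ established above: applied to $K(X)$ it gives $\mathbf{\Lambda}K(X)\simeq\Lambda K(X)$, and $\Lambda K(X)$ is fiberwise complete Segal by the hypothesis of the corollary. Since both the Segal condition and fiberwise completeness (the homotopy cartesian property of the square in its definition) are invariant under degreewise weak equivalence, $\mathbf{\Lambda}K(X)$ is fiberwise complete Segal as well, hence local.

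Next comes the inductive step, governed by two facts: (i) the $K$-unit $\eta\co A\to KA$ is a degreewise weak equivalence whenever $A$ is already fiberwise complete Segal, because $\eta$ is the derived unit of the fibrant replacement $K$ in the complete Segal model structure and, between fibrant objects there, a local weak equivalence is a degreewise one; and (ii) the $\mathbf{\Lambda}$-unit $A\to\mathbf{\Lambda}A$ is a degreewise weak equivalence whenever $A$ is conservative, since it is then a fibrant replacement of an object already fibrant in the conservative model structure. Because both conservativity and fiberwise complete Segalness are homotopy-invariant, starting from the local object $\mathbf{\Lambda}K(X)$ each successive term remains weakly equivalent to it and therefore remains local, and each transition map is a degreewise weak equivalence by (i) or (ii) as appropriate. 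This exhibits the tail of the tower as a chain of degreewise weak equivalences, completing the argument.

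The main obstacle is fact (i): one must argue that applying the Segalization-and-completion functor $K$ to an object that is already fiberwise complete Segal changes nothing up to degreewise weak equivalence. This rests on identifying $\eta$ as the derived unit of a Bousfield localization together with the standard principle that, between fibrant objects of the complete Segal (equivalently, fiberwise complete over $B$) model structure, the local weak equivalences coincide with the degreewise ones. The homotopy-invariance statements needed to keep the induction running are routine by contrast.
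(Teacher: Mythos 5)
Your argument is correct, but it is organized differently from the paper's. The paper's proof is a one\textendash step application of a single principle: by Proposition \ref{prop:fibrepl}(2) (really, by the argument in its proof) the inclusion $\mathbf{\Lambda}K(X) \to L(X)$ is a \emph{local} weak equivalence; both ends are local objects ($\mathbf{\Lambda}K(X)$ by the hypothesis together with the zigzag $\mathbf{\Lambda}\simeq\Lambda$, and $L(X)$ by Proposition \ref{prop:fibrepl}(1)); and a local weak equivalence between local objects is a degreewise weak equivalence. You instead apply that same principle stepwise: each transition map in the tail of the tower past $\mathbf{\Lambda}K(X)$ is a unit of $K$ or of $\mathbf{\Lambda}$ evaluated on an object that is already fibrant in the relevant localized model structure, hence a degreewise weak equivalence, and then you conclude by stabilization of the sequential homotopy colimit along the cofinal tail. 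What your version buys is independence from Proposition \ref{prop:fibrepl}: you do not need to know in advance that $L(X)$ is local, and in fact you re-derive that as a byproduct, since $L(X)$ becomes degreewise equivalent to the local object $\mathbf{\Lambda}K(X)$. The cost is the extra bookkeeping of the induction (checking at each stage that conservativity and the fiberwise complete Segal condition persist under degreewise weak equivalence over $B$, which is true but needs to be said, as you do). Both routes rest on the same engine, namely that between fibrant objects of a left Bousfield localization the local and degreewise weak equivalences coincide; the paper invokes it once, you invoke it at every stage. No gap.
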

\begin{proof}
The inclusion $\mathbf{\Lambda}K(X) \to L(X)$ is a local weak equivalence, and a map between local objects is a local weak equivalence
if and only if it is a degreewise weak equivalence.
\end{proof}

\section{Truncation}
Let $\fin_{\le k}$ be the full subcategory of $\fin$ made up by the objects $\uli \ell$ where $\ell=0,1,2,\dots,k$. The word \emph{truncation}, as used in this section, generally refers to a procedure for restricting something from $\fin$ to $\fin_{\le k}$ for a fixed $k$.

\begin{defn} {\rm The level $k$ \emph{truncation} of a simplicial space $X$ over $N\fin$ is the degreewise pullback $X^{k]}$ of
\[
	\begin{tikzpicture}[descr/.style={fill=white}]
	\matrix(m)[matrix of math nodes, row sep=2.5em, column sep=2.5em,
	text height=1.5ex, text depth=0.25ex]
	{
		 & X  \\
	N(\fin_{\le k}) & N\fin \\
	};
	\path[->,font=\scriptsize]
		(m-2-1) edge node [auto] {} (m-2-2)
		(m-1-2) edge node [auto] {} (m-2-2);
	\end{tikzpicture}
\]
}
\end{defn}

The tensor product $\boxtimes$ has a variant where we begin with fiberwise complete Segal
spaces $X$ and $Y$ over $N(\fin_{\le k})$ and define $X\boxtimes Y$ to be a fiberwise complete and conservative
Segal space over $N(\fin_{\le k})$.

\begin{prop}\label{prop-boxandtruncation}
The construction $\boxtimes$ commutes with truncation, i.e.
$$(X \boxtimes Y)^{k]} \simeq X^{k]} \boxtimes Y^{k]}$$
for any two simplicial spaces $X$ and $Y$ over $N\fin$.
\end{prop}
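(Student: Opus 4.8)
The plan is to prove the proposition by establishing separately that truncation commutes with the pretensor $\boxtimes^{\pre}$ and with the localization $(\Lambda K)^{\infty}$, and then composing the two compatibilities. Throughout write $B=N\fin$, $B'=N(\fin_{\le k})$, and let $j\co B'\hookrightarrow B$ be the inclusion, so that truncation is the degreewise pullback $j^{*}=(-)^{k]}$ along $j$. Since $j$ is a monomorphism of simplicial sets, $j^{*}$ is a base change, and therefore commutes with all homotopy colimits (homotopy colimits are stable under homotopy base change, as used already in the proof of theorem~\ref{thm:mainorb}); this is the feature I will exploit repeatedly. One may assume at the outset that $X$ and $Y$ are fiberwise complete Segal, since applying $K$ first also commutes with truncation by the argument below.

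First I would check that $\boxtimes^{\pre}$ commutes with truncation, in fact strictly. The key point is that the truncated box category is recovered as $\boxfin_{\le k}=p_{0}^{-1}(\fin_{\le k})$: an object $\uli r\leftarrow \uli k\to \uli s$ with $|\uli k|\le k$ automatically has $|\uli r|,|\uli s|\le k$ because the two structure maps are surjective, so $p_{1},p_{2}$ restrict to functors $\boxfin_{\le k}\to\fin_{\le k}$, and these are precisely the data defining the truncated variant of $\boxtimes^{\pre}$. Pulling back the defining square of $\boxtimes^{\pre}$ along $j$ (via the reference map $p_{0}$) and using the pasting law for pullbacks then gives
\[
(X\boxtimes^{\pre}Y)^{k]}
=(X\times Y)\times_{B\times B}\bigl(N\boxfin\times_{B}B'\bigr)
=(X\times Y)\times_{B\times B}N\boxfin_{\le k}
=X^{k]}\boxtimes^{\pre}Y^{k]},
\]
where the middle pullback is along $p_{0}$, the identity $N\boxfin\times_{B}B'=N\boxfin_{\le k}$ is the observation above, and the last equality uses that $N\boxfin_{\le k}\to B\times B$ factors through $B'\times B'$.

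The technical heart, and the step I expect to be the main obstacle, is to show that $\Lambda$ commutes with truncation, because a naive computation fails. Indeed $j^{*}(\Lambda_{B}A)_{r}$ is a homotopy colimit of terms $(A_{\ell}\times_{B_{\ell}}B_{m})\times_{B_{r}}B'_{r}=A_{\ell}\times_{B_{\ell}}(B_{m}\times_{B_{r}}B'_{r})$, and $B_{m}\times_{B_{r}}B'_{r}$ imposes the restriction ``all vertices lie in $\fin_{\le k}$'' only on those vertices of $[m]$ in the image of the map $[r]\to[m]$, whereas $\Lambda_{B'}(j^{*}A)_{r}$ calls for the term $A_{\ell}\times_{B_{\ell}}B'_{m}$, which imposes it on all of $[m]$. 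The remedy is to work with the variant $\Lambda^{\flat}$ of definition~\ref{defn-facelocsmall}, whose indexing category $\sE_{0}(r)$ consists of diagrams $[r]\to[m]\leftarrow[\ell]$ in which \emph{both} arrows are surjective. When $[r]\to[m]$ is onto, every vertex of $[m]$ is hit, so $B_{m}\times_{B_{r}}B'_{r}=B'_{m}$; consequently $j^{*}$ commutes with $\Lambda^{\flat}$ term by term, and since the index category $\sE_{0}(r)$ is independent of the base it commutes with the homotopy colimit as well. As $\Lambda^{\flat}\simeq\Lambda$, this yields a natural equivalence $j^{*}\Lambda_{B}\simeq\Lambda_{B'}j^{*}$. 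The same mechanism handles $K$: being built from homotopy colimits along the Segal (spine) and completeness generators, all supported on simplices of the base, and those lying over $B'$ being exactly the corresponding generators over $B'$, the functor $K$ commutes with $j^{*}$ up to equivalence. (Alternatively, in the model-categorical language of section~\ref{sec-locmodel} one checks that $j^{*}$ is right Quillen for the local structure, its left adjoint $j_{!}=j\circ(-)$ carrying the generating trivial cofibrations over $B'$ into those over $B$.)

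Finally I would assemble the pieces. Because $(\Lambda K)^{\infty}$ is a sequential homotopy colimit of alternating applications of $\Lambda$ and $K$, and $j^{*}$ commutes with sequential homotopy colimits as well as with each of $\Lambda$ and $K$ by the previous paragraph, we obtain $j^{*}(\Lambda K)^{\infty}\simeq(\Lambda K)^{\infty}j^{*}$ over the respective bases. Combining this with the strict identity of the second paragraph gives
\[
(X\boxtimes Y)^{k]}
=\bigl((\Lambda K)^{\infty}(X\boxtimes^{\pre}Y)\bigr)^{k]}
\simeq(\Lambda K)^{\infty}\bigl((X\boxtimes^{\pre}Y)^{k]}\bigr)
=(\Lambda K)^{\infty}\bigl(X^{k]}\boxtimes^{\pre}Y^{k]}\bigr)
=X^{k]}\boxtimes Y^{k]},
\]
which is the assertion. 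As indicated, the only delicate input is the incompatibility of $\Lambda$ with truncation at the level of the indexing category $\sE(r)$; passing to $\sE_{0}(r)$, i.e.\ to $\Lambda^{\flat}$, is exactly what repairs it, since the surjectivity of $[r]\to[m]$ forces the vertex restriction defining $B'\subset B$ to propagate across the base change.
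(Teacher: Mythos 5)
Your overall architecture is the same as the paper's: reduce the statement to the assertion that truncation $\tau^{*}=(-)^{k]}$ commutes with $\mathbf{\Lambda}$ and with $K$ (this is lemma~\ref{lem-commutetruncation}), use stability of sequential homotopy colimits under base change to pass to the tower defining $(\Lambda K)^{\infty}$, and invoke the strict identity $X^{k]}\boxtimes^{\pre}Y^{k]}=(X\boxtimes^{\pre}Y)^{k]}$. Your handling of $\Lambda$ is a correct and genuinely different route to half of that lemma: passing to $\Lambda^{\flat}$, the surjectivity of $[r]\to[m]$ really does force $B_{m}\times_{B_{r}}B'_{r}=B'_{m}$, so the comparison works term by term; the paper instead proves both halves of lemma~\ref{lem-commutetruncation} uniformly by an adjunction argument, with no explicit formula needed.

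The gap is in the $K$ step. Your primary argument --- that the fibrant replacement is built by attaching cells along generating trivial cofibrations, that $\tau^{*}$ commutes with the colimits involved, and that the cells ``lying over $B'$ are exactly the corresponding generators over $B'$'' --- fails for generators attached over simplices of $N\fin$ that straddle the truncation. Take the Segal generator $S(2)\to\Delta[2]\to N\fin$ over a $2$-simplex $\uli a\to\uli b\to\uli c$ with $a,c\le k<b$: its pullback along $\tau$ is $\Delta[0]\sqcup\Delta[0]\to\Delta[1]$ (the edge spanned by vertices $0$ and $2$), which is not a trivial cofibration in the local model structure over $N(\fin_{\le k})$, and attaching such a cell can change the local homotopy type. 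So commuting $\tau^{*}$ past the cell attachments does not yield $\tau^{*}K\simeq K\tau^{*}$. Your parenthetical alternative does not repair this: showing that $j_{!}=j\circ(-)$ is left Quillen makes $\tau^{*}$ \emph{right} Quillen, which only gives that $\tau^{*}$ preserves local (fibrant) objects; what you actually need is that $\tau^{*}$ preserves local \emph{weak equivalences} (applied to $X\to KX$ with $X$ not fibrant), and that is the other Quillen direction. The paper supplies exactly this via the \emph{right} adjoint $\tau_{*}$ of $\tau^{*}$: its fiber over a straddling simplex $\sigma$ is the fiber of the input over the face $\tau^{*}\sigma$ spanned by the vertices lying in $\fin_{\le k}$, one checks that $\tau_{*}$ so described preserves local objects, and the mapping-space adjunction then shows that $\tau^{*}$ preserves local weak equivalences --- for the completeness, Segal and conservativity conditions all at once. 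With that ingredient restored, the rest of your argument goes through.
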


The proof of this proposition relies on the following.

\begin{lem}\label{lem-commutetruncation}
Let $W$ be a simplicial space over $\fin$. Then
\begin{itemize}
\item[(i)] $\mathbf{\Lambda}(W^{k]}) \simeq (\mathbf{\Lambda} W)^{k]}$ \,,
\item[(ii)] $K(W^{k]}) \simeq (K W)^{k]}$\,.
\end{itemize}
\end{lem}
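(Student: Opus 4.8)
The plan is to read both parts as manifestations of one principle: truncation is base change along the inclusion $j\co N(\fin_{\le k})\to N\fin$ of the nerve of a full subcategory, and both $\mathbf{\Lambda}$ and $K$ are homotopical constructions compatible with this particular base change. Write $B=N\fin$ and $B'=N(\fin_{\le k})$, so that $W^{k]}=j^*W:=W\times_B B'$, formed degreewise. Two features of $j$ will be used throughout. First, $B$ is a simplicial \emph{discrete} space, so a degreewise weak equivalence over $B$ is detected fiberwise over each $B_n$; hence $j^*$ preserves degreewise weak equivalences, as it merely discards the components lying over objects of cardinality $>k$. Second, $\fin_{\le k}$ is full and \emph{replete} (isomorphic finite sets have equal cardinality), so $B'_n\subseteq B_n$ is precisely the set of $n$-simplices all of whose vertices have cardinality $\le k$. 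Two consequences: the square relating $B^{\heq}_1,B_0,(B')^{\heq}_1,(B')_0$ is cartesian, i.e. $j$ is fiberwise complete; and for any \emph{surjective} $\alpha\co[r]\to[n]$ one has
\[ B_n\times_{B_r}B'_r \;=\; B'_n\,, \]
because surjectivity of $\alpha$ constrains every vertex of the relevant $n$-simplex.

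For (i) I would first replace $\mathbf{\Lambda}$ by the explicit functor $\Lambda$ of Definition~\ref{defn-faceloc}; this is harmless since the two are joined by a natural zig-zag of degreewise weak equivalences (section~\ref{sec-locmodel}) and $j^*$ preserves those. I would then pass to the small model $\Lambda^\flat$ of Definition~\ref{defn-facelocsmall}, whose indexing category $\sE_0(r)$ consists of diagrams $[r]\to[n]\leftarrow[m]$ with \emph{both} arrows surjective. Because homotopy colimits are stable under homotopy base change,
\[ (j^*\Lambda^\flat W)_r \;\simeq\; \hocolimsub{\sE_0(r)^{\op}}\big(W_m\times_{B_m}B_n\big)\times_{B_r}B'_r\,. \]
Here the first arrow $\alpha\co[r]\to[n]$ is onto, so the factor $B_n\times_{B_r}B'_r$ collapses to $B'_n$; a one-line pullback rearrangement then rewrites each term as $W_m\times_{B_m}B'_n=(j^*W)_m\times_{B'_m}B'_n$, which is exactly the term-by-term description of $(\Lambda^\flat(j^*W))_r$. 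This yields a natural identification of the two homotopy colimits, hence (i).

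For (ii) the first move is to record that truncation preserves fiberwise complete Segal spaces. Indeed $W^{k]}=W\times_B B'$ sits in a cartesian square with horizontal arrows $W^{k]}\to B'$ and $W\to B$; applying the stability statement inside the proof of Proposition~\ref{prop-preboxcomplete} (whose hypothesis, that $B'\to B$ is fiberwise complete, is the cartesian square noted above) shows $W^{k]}\to B'$ is fiberwise complete whenever $W\to B$ is, and the homotopy-pullback-of-Segal-spaces argument keeps it Segal. With this in hand I would identify $(KW)^{k]}$ as $K(W^{k]})$ through the universal property of $K$: apply $j^*$ to the unit $W\to KW$ to obtain $W^{k]}\to (KW)^{k]}$, a map whose target is fiberwise complete Segal over $B'$. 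If this map is a (Segalization-and-completion) local weak equivalence over $B'$, then the universal property of the fibrant replacement over $B'$ forces $(KW)^{k]}\simeq K(W^{k]})$.

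The step I expect to be the main obstacle is exactly the last one: showing $j^*$ sends local weak equivalences over $B$ to local weak equivalences over $B'$. This is delicate because $j^*$ does not preserve the Segal generating trivial cofibrations on the nose --- a map $S(n)\to\Delta[n]$ sitting over a chain with intermediate vertices of cardinality $>k$ restricts to something that is no longer a Segal equivalence. I would circumvent this using the adjunction $j^*\dashv j_*$, through the natural equivalence
\[ \rmap_{B'}(j^*(-),Z') \;\simeq\; \rmap_B(-,\RR j_* Z')\,, \]
which reduces the claim to showing that $\RR j_*$ carries fiberwise complete Segal spaces over $B'$ to fiberwise complete Segal spaces over $B$; the fullness and repleteness of $\fin_{\le k}$ together with the discreteness of $B$ are what make this hold. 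I note finally that in the application (Proposition~\ref{prop-boxandtruncation}) the lemma is invoked only for $W$ that are already fiberwise complete Segal over $B$, where $KW\simeq W$ and (ii) collapses to the fact just proved that truncation preserves fiberwise complete Segal spaces; this special case is all that is needed downstream.
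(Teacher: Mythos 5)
Your proposal is correct, but it splits into two different relationships with the paper's proof. For part (ii) you follow essentially the paper's route: the paper also reduces everything to showing that restriction $\tau^*$ along $\fin_{\le k}\subset\fin$ preserves local weak equivalences, and proves this exactly as you propose, via the adjunction $\tau^*\dashv\tau_*$ together with the check (using the explicit fiberwise description of $\tau_*$, which rests on fullness and repleteness of $\fin_{\le k}$) that $\tau_*$ carries local objects over $N(\fin_{\le k})$ to local objects over $N\fin$; your identification of this as the delicate step, and of why the naive argument with generating trivial cofibrations fails, matches the paper. For part (i), however, the paper runs the \emph{same} adjunction argument with ``local'' reinterpreted as ``conservative'' (so it must also verify that $\tau_*$ preserves conservativity), whereas you instead compute directly with the small model $\Lambda^\flat$ of definition~\ref{defn-facelocsmall}, using that for a surjection $[r]\to[n]$ one has $B_n\times_{B_r}B'_r=B'_n$. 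Your computation is valid (and it is essential that you passed from $\sE(r)$ to $\sE_0(r)$, since the identity $B_n\times_{B_r}B'_r=B'_n$ genuinely needs surjectivity of $[r]\to[n]$); it buys a term-by-term identification of $\Lambda^\flat(W^{k]})$ with $(\Lambda^\flat W)^{k]}$ rather than a mere zig-zag, and it spares you the verification that $\tau_*$ preserves conservative objects, at the cost of invoking the comparison $\mathbf{\Lambda}\simeq\Lambda\simeq\Lambda^\flat$ and some care with the simplicial structure on $\Lambda^\flat$. One small caveat: your closing remark that proposition~\ref{prop-boxandtruncation} only needs (ii) for $W$ already fiberwise complete Segal is an overstatement, since in the tower defining $L(Z)$ the functor $K$ is also applied to $\mathbf{\Lambda}K(Z)$ and its iterates, which need not be fiberwise complete Segal spaces in general; this does not affect your main argument, which proves (ii) in the required generality anyway.
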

\begin{proof}
Let $\Phi$ be $\mathbf{\Lambda}$ or $K$. We view $\Phi$ as an endofunctor on the category of simplicial spaces over $B$ where $B$ is $\fin$ or $\fin_{\le k}$, depending on the context. For the duration of this proof, we call a map $X \to Y$ of simplicial spaces over a simplicial space $A$ a \emph{local} weak equivalence if $\Phi X \to \Phi Y$ is a degreewise weak equivalence over $B$. An object $Z \to B$ will  be called \emph{local} if it there is a zigzag of degreewise weak equivalences (over $B$) between $Z$ and $\Phi(X)$ for some $X \to B$. In other words, if $\Phi = K$, local means fiberwise complete; if $\Phi = \mathbf{\Lambda}$, local means conservative.

Let $\tau$ denote the inclusion of $\fin_{\le k}$ in $\fin$ and $\tau^*$ the pullback along $\tau$. By construction, the map $X \to \Phi X$ over $\fin$ is a local weak equivalence. Now consider the following square
\begin{equation*}
	\begin{tikzpicture}[descr/.style={fill=white}, baseline=(current bounding box.base)] ]
	\matrix(m)[matrix of math nodes, row sep=2.5em, column sep=2.5em,
	text height=1.5ex, text depth=0.25ex]
	{
	\tau^* X & \tau^* \Phi X \\
	\Phi \tau^* X & \Phi \tau^* \Phi X \; .\\
	};
	\path[->,font=\scriptsize]
		(m-1-1) edge node [auto] {} (m-1-2)
		(m-1-1) edge node [left] {} (m-2-1)
		(m-1-2) edge node [auto] {} (m-2-2)
		(m-2-1) edge node [auto] {} (m-2-2);
	\end{tikzpicture}
\end{equation*}
The vertical maps are local weak equivalences by construction. We claim that the top horizontal arrow is a local weak equivalence. Taking this for granted for the moment, it follows that the lower horizontal arrow is also a weak equivalence.
Since $\tau^*$ sends local objects to local objects, the maps into the lower right hand corner are local weak equivalences between local objects and so must be degreewise weak equivalences (over $\fin_{\le k}$). That is, $\Phi \tau^*X \simeq \tau^* \Phi X$.

We now turn to the proof of the claim. We will show that if $X$ and $Y$ are two simplicial spaces over $\fin$ and $f : X \to Y$ is a map over $\fin$ which is also a local weak equivalence, then $\tau^* f$ is a local weak equivalence. The restriction functor $\tau^*$ has a right adjoint $\tau_*$. For a simplicial space $W$ over $\fin_{\le k}$ and an $n$-simplex $\sigma$ of $N\fin$, the fiber of $\tau_* W$ over $\sigma$ is identified with the fiber of $W$ over $\tau^*\sigma$ if $\tau^*\sigma$ is non-empty, and otherwise is a point. Here $\tau^* \sigma$ denotes the pullback of $\sigma$ along $\tau$. If among the $n+1$ vertices of $\sigma$ exactly $\ell+1$ are in $\fin_{\leq k}$, then $\tau^* \sigma$ is an $\ell$-simplex in $N(\fin_{\le k})$. Using this description of $\tau_*$ it can be checked, for the fiberwise completeness and conservative conditions separately, that if $W$ is a local simplicial space over $\fin_{\le k}$ then $\tau_* W$ is a local simplicial space over $\fin$.

The map $\tau^* f$ is a local weak equivalence if and only if for every local simplicial space $W$ over $\fin_{\le k}$, the restriction map
\[
\RR \map_{\fin_{\le k}}(\tau^*Y, W) \to \RR \map_{\fin_{\le k}}(\tau^*X, W)
\]
is a weak equivalence. (Here the derived mapping spaces are taken with respect to degreewise weak equivalences.) By adjunction, this map is weakly equivalent to
\[
\RR \map_{\fin}(Y,\tau_*W) \to \RR \map_{\fin}(X, \tau_*W) \;.
\]
But, since $\tau_* W$ is local and $f$ is a local weak equivalence, this map is a weak equivalence. Therefore, $\tau^* f$ is a weak equivalence.
\end{proof}

\begin{proof}[Proof of Proposition \ref{prop-boxandtruncation}]
As before, write $Z$ for $X \boxtimes^\pre Y$. Since homotopy colimits are stable under homotopy base change, the simplicial space $L(Z)^{k]}$ is weakly equivalent to the homotopy colimit of
\[
Z^{k]} \to K(Z)^{k]} \to \mathbf{\Lambda}K(Z)^{k]} \to K\mathbf{\Lambda}K(Z)^{k]} \to
\mathbf{\Lambda}K\mathbf{\Lambda}K(Z)^{k]} \to \cdots
\]
(over $\fin_{\le k}$). Using lemma \ref{lem-commutetruncation}, it follows that $L(Z)^{k]}$ is weakly equivalent to $L(Z^{k]})$. This proves the result, since $X^{k]}\boxtimes^\pre Y^{k]}=(X\boxtimes^\pre Y)^{k]}$.
\end{proof}

\section{Relations and speculations involving the little disks operad}

In \cite{BoavidaWeissLong}, we showed that the little $d$-disks operad $E_d$ can be fully reconstructed from the configuration category of $\RR^d$. Let us briefly recall how this goes. A simplicial space over the nerve of $\fin$ is the same as a contravariant functor from $\simp(\fin)$, the simplex category of $N\fin$, to spaces. An object of $\simp(\fin)$ is a string of maps of finite sets. Viewing such a string as a rooted tree (with no leaves), we obtain a functor $j$ from $\simp(\fin)$ to $\mathsf{Tree}$, the category of trees of \cite{MoerdijkWeiss}. Then, by pullback, $j$ determines a functor $j^*$ from $\mathsf{Tree}$-spaces (alias dendroidal spaces) to $\simp(\fin)$-spaces.

Let $\sC$ denote the full $\infty$-subcategory of the category of dendroidal spaces spanned by those which satisfy a Segal condition and have contractible spaces of $0$ and $1$-arity operations. In \cite[section 7]{BoavidaWeissLong}, we proved the following.
\begin{thm}
The restriction of $j^*$ to $\sC$ is homotopically fully faithful.
\end{thm}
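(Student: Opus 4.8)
The plan is to realize the comparison map as an adjunction unit and then to reduce a statement about arbitrary trees to a local statement about corollas and edges. The restriction functor $j^*$ fits into an adjoint triple $j_! \dashv j^* \dashv j_*$ between dendroidal spaces and $\simp(\fin)$-spaces, so for $A,B$ in $\sC$ the comparison map factors as
\[
\RR\map_{\dend}(A,B)\longrightarrow \RR\map_{\simp(\fin)}(j^*A,j^*B)\simeq \RR\map_{\dend}(A,\RR j_* j^*B),
\]
where the second equivalence is the derived adjunction and the composite is induced by the unit $\eta_B\co B\to \RR j_* j^*B$. Hence it suffices to prove that $\eta_B$ is an objectwise weak equivalence of dendroidal spaces for every $B$ in $\sC$; homotopical full faithfulness on $\sC$ then follows at once, since the source object $A$ plays no role in this reduction.

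I would test $\eta_B$ on each tree $T$. By the pointwise formula for the right Kan extension, the target is the homotopy limit
\[
(\RR j_* j^*B)(T)\simeq \holim_{(c,\,jc\to T)} B(jc)
\]
taken over the comma category $(j\downarrow T)$, whose objects are strings $c$ of finite sets equipped with a map $jc\to T$ in $\dend$. On the other side, the Segal condition on $B$ identifies $B(T)$ with the homotopy limit of $B$ over the Segal core of $T$, that is, over its edges and vertex-corollas. The goal is therefore to match these two homotopy limits. The key point is that the strings $c$ for which $jc$ is a single edge or a single corolla already generate the comma category, and that $B\circ j$ is, up to equivalence, right Kan extended from this generating subdiagram: a string whose image factors through a unary vertex or a stump of $T$ contributes only through a $1$-arity or $0$-arity operation space, and these are contractible by the defining property of $\sC$. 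This is exactly where the hypothesis on $\sC$ enters.

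Granting this, the homotopy limit $\holim_{(j\downarrow T)} B\circ j$ collapses onto the homotopy limit over the subdiagram of minimal strings — by a combination of cofinality and the right-Kan-extension property just described — and the latter is precisely the Segal-core homotopy limit computing $B(T)$; thus $\eta_B$ is an equivalence on $T$ and the theorem follows. The main obstacle is this last combinatorial input: one must describe all maps $jc\to T$ in $\dend$ explicitly enough to exhibit the edges and vertex-corollas of $T$ as a homotopy-initial subcategory of $(j\downarrow T)$, and to verify that every remaining object is connected to this subcategory through maps on which $B\circ j$ is forced to be an equivalence by the Segal condition together with the contractibility of the $0$- and $1$-arity spaces. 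The formal adjunction and the Segal reduction are routine; it is the finality statement for $(j\downarrow T)$, and the bookkeeping of degeneracies and stumps that it requires, that carries the real weight.
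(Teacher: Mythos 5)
The paper does not actually prove this theorem here --- it is quoted from \cite[\S 7]{BoavidaWeissLong} --- so your argument has to be assessed on its own terms. The formal part (factoring the comparison map through the derived unit $\eta_B\co B\to \RR j_* j^*B$, and observing that it would suffice for $\eta_B$ to be an objectwise weak equivalence) is a correct reduction, but the reduced statement is false, and that is where the proof breaks down. The functor $j$ sends a string of finite sets to a rooted tree \emph{with no leaves}: every top edge is capped by a nullary vertex. (This is forced by the intended comparison $j^*E_d\simeq\config(\RR^d)$, since a morphism $\uli k\to\uli\ell$ in $\fin$ need not be surjective and the unhit points correspond to stumps.) Consequently, for a tree $T$ that does have leaves --- already for the $n$-corolla $C_n$ with $n\geq 1$ --- the comma category $(j\downarrow T)$ is essentially empty: a tree containing a stump admits no morphism in $\dend$ to $C_n$, because the free operad on $C_n$ has no nullary operations. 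Hence $(\RR j_* j^* B)(C_n)$ sees at most the edges of $C_n$ and cannot recover $B(C_n)$, the space of $n$-ary operations, which is not contractible for, say, $B$ the dendroidal nerve of $E_d$ and $n\geq 2$. In particular the edges and vertex-corollas of $T$ do \emph{not} form a subcategory of $(j\downarrow T)$, so the cofinality statement that you correctly identify as carrying the real weight cannot even be formulated: the Segal core of $T$ is simply not in the image of $j$ over $T$.

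The $n$-ary information is of course present in $j^*B$ --- by the Segal condition and the contractibility of the $0$-ary space, the value of $B$ on the \emph{capped} corolla is equivalent to $B(C_n)$ --- but it is extracted by a colimit procedure, not by the pointwise right Kan extension. The workable formulation goes through the other adjoint: full faithfulness of $j^*$ on $\sC$ is equivalent to the derived counit $\LL j_! j^*A\to A$ being a weak equivalence in the localized model structure whose fibrant objects are the objects of $\sC$, which is exactly the form in which the paper records the consequence $\LL j_!\config(\RR^d)\to E_d$. There one analyses $\LL j_! j^*A$ as a homotopy colimit over closed trees mapping \emph{out of} the given data, and the contractibility of the $0$- and $1$-ary spaces enters to show that capping and uncapping leaves are local equivalences. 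If you insist on the unit formulation, you would have to prove only that $\eta_B$ induces equivalences on $\RR\map(A,-)$ for $A$ in $\sC$; that is not an objectwise statement about $\RR j_* j^*B$, and making it precise essentially forces you back to the counit.
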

Moreover, $j^* E_d \simeq \config(\RR^d)$ and so the theorem implies that the derived counit
\[
\LL j_! \config(\RR^d) \to E_d
\]
is a weak equivalence of dendroidal spaces (in a model structure in which the fibrant objects are precisely the objects of $\sC$). An immediate corollary of Theorem \ref{thm:main} is then that $j^* E_d \boxtimes j^* E_\ell \simeq j^* E_{d + \ell}$.

\medskip
We end with a question. Suppose $P$ and $Q$ are operads with contractible spaces of $0$ and $1$-ary operations and let $P \otimes Q$ denote their (homotopical) Boardman-Vogt tensor product.
Under what conditions is there a natural weak equivalence
\[
j^*(P) \boxtimes j^*(Q) \to j^*(P \otimes Q) \; ?
\]

What is meant by weak equivalence here is also up for negotiation. We know of two cases for which the answer is yes, and the meaning of weak equivalence is degreewise. Assuming theorem \ref{thm:main}, this is the case when $P$ and $Q$ are little disks operads. This follows easily from the additivity theorem and is in fact equivalent to it. The answer is also yes when one of the operads, $P$ say, is the operad with only two operations: one in degree $0$ and one in degree $1$ (the identity). In that case, $P$ is the unit for the Boardman-Vogt product (in the context where all operads have contractible spaces of $0$ and $1$ ary operations). But $j^*P$ corresponds to the map of simplicial spaces $\Delta[1] \to N\fin$ corresponding to the unique map $\uli 0 \to \uli 1$, and it can be checked directly that $j^*Q \boxtimes^{\pre} j^*P \simeq j^*Q$.


\begin{thebibliography}{10}
\bibitem{Andrade} R.~Andrade, \emph{From manifolds to invariants of $E_n$-algebras}, Ph.D thesis, MIT 2010.
\bibitem{BoavidaWeissLong} P.~Boavida de Brito and M.S.~Weiss, \emph{Spaces of smooth embeddings and configuration categories},
arXiv:150201640v2
\bibitem{FieVo} Z.~Fiedorowicz and R.~M.~Vogt, \emph{An additivity theorem for the interchange of $E_n$ structures}, Adv.Math.273 (2015),
421--484.
\bibitem{Dunn} G.~Dunn, \emph{Tensor product of operads and iterated loop spaces}, J.Pure Appl.Algebra 50 (1988), 237--258.
\bibitem{GalToKo} I.~Galvez-Carrillo, A.~Tonks and J.~Kock, \emph{Decomposition spaces, incidence algebras and M\"obius inversion},
arXiv:1404.3202.
\bibitem{Lurie} J.~Lurie, \emph{Derived Algebraic Geometry VI: $E_k$ Algebras}, arXiv:0911.0018
\bibitem{Miller} D.~A.~Miller, \emph{Popaths and holinks}, J. Homotopy Relat. Struct. 4 (2009), 265--273.
\bibitem{Miller2} D.~A.~Miller, \emph{Strongly stratified homotopy theory}, Trans.Amer.Math.Soc. 365 (2013), 4933--4962.

\bibitem{MoerdijkWeiss} I.~Moerdijk and I.~Weiss, \emph{Dendroidal sets}, Algebr. Geom. Topol. 7 (2007), 1441--1470.
\bibitem{Woolf} J.~Woolf, \emph{The fundamental category of a stratified space}, J. Homotopy Rel. Structures 4 (2009), 359--387.
\end{thebibliography}
\end{document}